\theoremstyle{plain}
    \newtheorem{thm}{Theorem}[section]
    \newtheorem{prop}[thm]{Proposition}
    \newtheorem{lemma}[thm]{Lemma}
    \newtheorem{cor}[thm]{Corollary}
\theoremstyle{definition}
    \newtheorem{rem}[thm]{Remark}
\theoremstyle{remark}
    \newtheorem{example}[thm]{Example}
\numberwithin{equation}{section}
\newcommand{\rar}{\ensuremath{\rightarrow}}
\newcommand{\lrar}{\ensuremath{\longrightarrow}}
\newcommand{\ints}{\mathbb{Z}}
\newcommand{\ord}{\text{ord}}
\newcommand{\intsn}{\mathbb{Z}_n}
\newcommand{\intsnm}{\mathbb{Z}_n \times \mathbb{Z}_m}
\newcommand{\intstwo}{\mathbb{Z} \times \mathbb{Z}}
\begin{document}

% Article information
\title{When is a subgroup of a ring an ideal?}
\date{\today}

% author one information
\author{Sunil K. Chebolu}
\address{Department of Mathematics \\
Illinois State University \\
Normal, IL 61790, USA} \email{schebol@ilstu.edu}

% author two information
\author{Christina L. Henry}
\address{Department of Mathematics \\
Illinois State University \\
Normal, IL 61790, USA} \email{clhenry@ilstu.edu}

\thanks{
The first author is supported by an NSA grant (H98230-13-1-0238)}

% AMS information
\keywords{Ring, subgroup, ideal, Mathieu subspace, Goursat}
\subjclass[2000]{Primary 113AXX; Secondary 20KXX}

% Abstract
\begin{abstract}
Let $R$ be a commutative ring.  When is a  subgroup of $(R, +)$  an ideal of $R$? We investigate this problem for the rings $\ints^{d}$ and $\prod_{i=1}^{d} \ints_{n_{i}}$. 
%For various subgroups of these rings  we obtain necessary and sufficient conditions under which the above question has an affirmative answer. 
In the case of $\intstwo$ and $\intsnm$,  our results give, 
for any given subgroup of these rings, a computable criterion for  the problem under consideration. We also compute the probability that a randomly chosen subgroup from $\intsnm$ is an ideal.
\end{abstract}

\maketitle
\thispagestyle{empty}

%\tableofcontents

% Body of the paper

\section{Introduction}
Let $R$ be a commutative ring.  
The object of this paper is to determine  necessary and sufficient conditions for a given subgroup of  $(R, +)$ to be an ideal of $R$.  Our motivation for asking this question arose from some problems on Mathieu subspaces (more is explained in the next paragraph).  To begin, consider the ring $\ints$ of integers. Every subgroup of $\ints$ is of the form $k \ints$ for some integer $k$, and each of these subgroups is clearly also an ideal. In fact, the same is true also for the rings $\ints_n$ (the ring of integers modulo $n$).  It turns out that these are the only rings $R$ in which every subgroup of $(R, +)$ is also an ideal of $R$; see Proposition \ref{zzn}.
In particular, when we consider product rings we get some subgroups that are not ideals.  For instance  the diagonal $\{(x, x) \, | \, x \in \ints\}$ in $\ints \times \ints$ is clearly a subgroup of $(\ints \times \ints, +)$ but not an ideal in the ring $\ints \times \ints$. In this paper we consider the product rings $\ints^{d}$ (in Section \ref{infinite}) and $\prod_{i=1}^{d} \ints_{n_{i}}$ (in Section \ref{finite}), and for various subgroups of these rings we give necessary and sufficient conditions for a given subgroup to be an ideal.   In the case of $\intstwo$ and $\intsnm$, our necessary and sufficient conditions are also computable for any given subgroup of these rings. As one would expect, our results show that in general  an arbitrary subgroup of a ring is seldom an ideal. In fact, we make this statement precise in Theorem \ref{counting} where we  compute explicitly the  probability that a randomly chosen subgroup from $\intsnm$ is an ideal. For instance, when $p$ is a prime and the ring is $\ints_p \times \ints_p$, this probability is only $\frac{4}{p+3}$.  We will  use several basic facts and tools from abstract algebra which can be found in \cite{DummitFoote}.  We also  use a theorem in group theory due to Goursat; a good exposition of this theorem can be found in \cite{jp}, and we review it in Theorem \ref{Goursat}. Although we focus mainly on  the rings $\intstwo$ and $\intsnm$, where possible we offer some generalizations. By a subgroup of a ring $R$, we always mean a subgroup of the additive group $(R, +)$.

This problem came up naturally when the first author and his collaborators (Yamskulna and Zhao) were recently working on some problems involving Mathieu subspaces in some rings. A Mathieu subspace is a generalization of a ideal: For a commutative ring $R$, a $\ints$-submodule $M$ of $R$ is said to be a Mathieu subspace of $R$ if whenever $a^n$ belongs to $M$  (for all $n \ge 1$), then $ra^n$ belongs to $M$ for all $n$ sufficiently large. Every ideal is a Mathieu subspace, but the converse is not necessarily true.  The notion of a Mathieu subspace was introduced by Wenhua Zhao in \cite{wz}, and it proved to be a central idea in the research on several landmark conjectures in algebra and geometry including the Jacobian conjecture. As a result, Mathieu subspaces received serious attention and extensive writing; see \cite{wz12} and references in it. 
%The problem of classifying the Mathieu subspaces of a given ring is an extremely hard problem. Even in a ring as simple as  $\intstwo$ where the classification of ideals is straightforward, the classification of Mathieu subspaces is not known.  
Recently when the first author and his collaborators were working on some problems on Mathieu subspaces, they were led to the problem of determining when a subgroup of a ring is a Mathieu subspace. Since ideals are important and  relatively well-understood  classes of Mathieu subspaces, it was natural to investigate the same question for ideals. Thus the problem we study in this paper is an interesting offshoot of our Mathieu subspaces project. 

\vskip 3mm\noindent
\textbf{Acknowledgements:} We would like to thank  the referee for his/her  comments and suggestions which we used to improve the exposition of this paper.

\section{Generators} \label{generators}

In the introduction we noted that the rings $\ints$ and $\ints_n$ have the property that every subgroup in them  is also an ideal. It is not hard to show that these are the only rings with this property. 

\begin{prop} \label{zzn}
Let $R$ be a unital commutative ring. i.e., a commutative ring with a multiplicative identity.  If every subgroup of  $(R, +)$ is also an ideal, then $R$ is isomorphic to either $\ints$ or $\ints_n$ for some positive integer $n$.  
\end{prop}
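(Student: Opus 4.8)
The plan is to extract everything from a single, very special subgroup: the cyclic additive subgroup $\langle 1 \rangle$ generated by the multiplicative identity. Since $\langle 1 \rangle$ is a subgroup of $(R,+)$, the hypothesis forces it to be an ideal of $R$. But $\langle 1 \rangle$ contains $1$, and in a unital ring any ideal containing a unit is the whole ring; hence $R = \langle 1 \rangle$. In other words, $R$ is generated as an abelian group by its identity element, so every element of $R$ has the form $m \cdot 1$ for some $m \in \ints$.

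Next I would translate this into a statement about the characteristic homomorphism. The map $\varphi \colon \ints \rar R$ sending $m \mapsto m \cdot 1$ is a ring homomorphism (indeed the unique one), and the previous paragraph says precisely that $\varphi$ is surjective. Its kernel is an ideal of $\ints$, hence equals $n\ints$ for a unique integer $n \ge 0$. By the first isomorphism theorem, $R \cong \ints / n\ints$. If $n = 0$ this gives $R \cong \ints$; if $n \ge 1$ it gives $R \cong \ints_n$, which completes the proof. (The degenerate case $n = 1$ corresponds to the zero ring, which is $\ints_1$.)

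I do not expect a genuine obstacle here: the content is just the observation that the single subgroup $\langle 1 \rangle$ already pins down the additive structure of $R$, after which the classification of quotients of $\ints$ does the rest. The only point worth flagging is where unitality enters — it is exactly the step ``an ideal containing $1$ is all of $R$'' — and the statement really does fail without it; for instance, $\ints$ equipped with the zero multiplication has every subgroup an ideal, yet is not isomorphic as a ring to $\ints$ or to any $\ints_n$.
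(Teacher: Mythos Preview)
Your proof is correct and follows essentially the same argument as the paper: use the hypothesis on the cyclic subgroup generated by $1_R$, conclude it is an ideal containing $1_R$ hence all of $R$, so the characteristic map $\ints \to R$ is surjective, and apply the first isomorphism theorem. Your additional remark on where unitality is needed (and the non-unital counterexample) is a nice touch not present in the paper.
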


\begin{proof}
Since $R$ is a unital ring, there is a natural map $\phi \colon \ints \lrar R$ which sends $1$ to $1_R$, the multiplicative identity of $R$. The image of this homomorphism is exactly the subgroup of $(R, +)$ that is generated by $1_R$. If every subgroup of $(R, +)$ is an ideal, then, in particular, the subgroup generated by  $1_R$ is also an ideal. However, the only ideal which contains $1_R$ is the entire ring $R$. This means $\phi$ is surjective. From the first isomorphism theorem, we have $\ints/\ker \phi \cong R$.   It follows that $R$ is isomorphic to $\ints$ or $\ints_n$ for some integer $n$. (In the former case $R$ has characteristic $0$, and in the latter $R$ has characteristic $n$.)
\end{proof}

We will now show that every subgroup of $\ints^{d}$ and $\prod_{i=1}^{d} \ints_{n_{i}}$  is generated by at most $d$ elements.   We will recall some standard results from abstract algebra which can be found in \cite{DummitFoote}. 
%Recall that a Principal Ideal Domain (PID) is an integral domain in which every ideal is generated by one element ($\ints$ is an example of a PID).

\begin{thm}
Let $R$ be a PID and let $M$ be a free $R$-module of rank $r$. Then every submodule of $M$ is also free and has rank at most $r$.
\end{thm}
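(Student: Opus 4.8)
The statement to prove is the standard structure theorem for submodules of free modules over a PID. The plan is to proceed by induction on the rank $r$ of the free module $M$.

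For the base case $r = 0$ (or $r = 1$), note that when $r = 1$ we have $M \cong R$, so a submodule of $M$ corresponds to an ideal of $R$; since $R$ is a PID, every ideal is principal, hence free of rank $0$ (if the ideal is zero) or rank $1$ (if it is a nonzero principal ideal $(a)$, which is isomorphic to $R$ as an $R$-module because $R$ is a domain). This handles $r \le 1$.

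For the inductive step, assume the result holds for free modules of rank less than $r$, and let $M$ be free of rank $r$ with basis $e_1, \dots, e_r$. Let $N$ be a submodule of $M$. Consider the projection $\pi \colon M \lrar R$ onto the last coordinate, i.e., $\pi(\sum a_i e_i) = a_r$. Then $\pi(N)$ is a submodule of $R$, hence an ideal, hence principal, say $\pi(N) = (a)$. Let $M' = \Ker \pi = Re_1 \oplus \cdots \oplus Re_{r-1}$, which is free of rank $r-1$, and let $N' = N \cap M' = \Ker(\pi|_N)$; by the inductive hypothesis $N'$ is free of rank at most $r-1$. If $a = 0$ then $N = N'$ and we are done. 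Otherwise choose $x \in N$ with $\pi(x) = a$; then one checks that $N = N' \oplus Rx$: every $y \in N$ can be written uniquely as $(y - cx) + cx$ where $c \in R$ satisfies $\pi(y) = ca$ (using that $R$ is a domain so $c$ is determined), and $y - cx \in N'$. Since $Rx \cong R$ is free of rank $1$, $N$ is free of rank at most $(r-1) + 1 = r$.

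The main obstacle, and the one genuinely PID-dependent ingredient, is the passage $\pi(N) = (a)$ with $a$ a single generator: this is exactly where ``every ideal of $R$ is principal'' is used, and it is what forces the rank bound $r$ rather than something larger. The rest of the argument — that the short exact sequence $0 \to N' \to N \to \pi(N) \to 0$ splits because $\pi(N)$, being a nonzero ideal in a domain, is free — is formal once that point is in hand. One should also take a moment to verify the direct sum decomposition is internal and the sum $N' \cap Rx = 0$, which again uses that $R$ has no zero divisors.
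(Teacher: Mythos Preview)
Your proof is correct and is essentially the standard inductive argument found in most algebra textbooks. However, the paper does not actually prove this theorem: it is stated without proof as a standard result from abstract algebra, with a reference to Dummit--Foote \cite{DummitFoote}, and then used as a black box to handle $\ints^d$ and (via its corollary) $\prod_i \ints_{n_i}$. So there is nothing in the paper to compare your argument against; your write-up supplies a proof where the paper simply cites one.
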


This theorem takes care of $\ints^{d}$.  For $\prod_{i=1}^{d} \ints_{n_{i}}$, we need the following corollary which can be derived easily from the above theorem.
%This  is also well-known but we include the proof because it is short.

\begin{cor}
Let $R$ be a PID and let $M$ be a finitely generated $R$-module. If $M$ is generated by $r$ elements, then every submodule of $M$ is 
generated by at most $r$ elements.
\end{cor}

\begin{comment}
\begin{proof}
Since $M$ is generated by $r$ elements, we have a surjective $R$-module map
\[ \phi \colon R^r \rar M\]
where the standard basis elements of $R^r$ are mapped onto a chosen set of $r$ generators of $M$. Let $N$ be an arbitrary submodule of $M$.  Now consider $\phi^{-1}(N)$, which  is an $R$-submodule of $R^r$. By the above theorem, it is  free and has rank $s$ where $s \le r$. The restriction of $\phi$ to  $\phi^{-1}(N)$ surjects onto $N$. In other words, we have 
\[ \phi |_{\phi^{-1}(N)}   \colon\ \ \phi^{-1}(N)  \cong R^s \rar N.\]
This shows that $N$ is generated by at most $r$ elements, as desired.
\end{proof}
\end{comment}

\begin{cor}
Every subgroup of $(\prod_{i=1}^{d} \ints_{n_{i}}, +)$ and that of $(\ints^{d}, +)$ is generated by at most $d$ elements.
\end{cor}

\begin{proof}
The ring $\prod_{i=1}^{d} \ints_{n_{i}}$ is a $\ints$-module that is clearly generated by $d$ elements; the standard basis forms a generating set.   Therefore by the above corollary 
every subgroup of $\prod_{i=1}^{d} \ints_{n_{i}}$ is generated by at most $d$ elements.  The corresponding statement for $\ints^{d}$ is a special case of the above theorem.
\end{proof}

This corollary gives a natural stratification of the class of  all non-subgroups of these rings which is based on the minimal number of generators of a given subgroup. This stratification will be helpful in our analysis.

\section{The ring $\ints \times \ints$} \label{infinite}
In this section we determine when a given additive subgroup of the ring  $\ints^{d}$  is  an ideal. The trivial subgroup which consists of the single element $(0, 0, \cdots, 0)$ is also trivially  an ideal, so we will consider non-zero subgroups. As explained in the previous section, a non-zero subgroup of $\ints^{d}$ is free of rank at most $d$.  We will be begin with rank 1 subgroups where the problem is straightforward.

\begin{prop}
Let $L$ be a subgroup of $\ints^{d}$ generated by $(a_{1}, \cdots, a_{d})$.  $L$ is an ideal if and only if all but one of the $a_{i}$'s are zero.
\end{prop}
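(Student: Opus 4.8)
The plan is to work directly from the definitions. Suppose $L = \langle (a_1, \dots, a_d) \rangle$ and assume $L$ is an ideal of $\ints^d$. For each index $j \in \{1, \dots, d\}$, the standard basis vector $e_j = (0, \dots, 1, \dots, 0)$ lies in the ring $\ints^d$, so the ideal property forces $e_j \cdot (a_1, \dots, a_d) = (0, \dots, a_j, \dots, 0)$ to lie in $L$. Since $L$ is generated as a group by the single element $(a_1, \dots, a_d)$, every element of $L$ is an integer multiple $k(a_1, \dots, a_d)$. So for each $j$ there is an integer $k_j$ with $k_j a_i = 0$ for all $i \neq j$ and $k_j a_j = a_j$. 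If two distinct coordinates $a_{j}$ and $a_{j'}$ were both nonzero, then from the first condition applied with index $j$ we would get $k_j a_{j'} = 0$, hence $k_j = 0$ (as $\ints$ is a domain), and then $k_j a_j = 0 \neq a_j$ would contradict the second condition. Hence at most one $a_i$ is nonzero.

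Conversely, suppose all but one of the $a_i$ are zero, say $a_i = 0$ for $i \neq j$ and the generator is $(0, \dots, a_j, \dots, 0)$. Then $L = \{(0, \dots, k a_j, \dots, 0) : k \in \ints\} = \{0\} \times \cdots \times a_j \ints \times \cdots \times \{0\}$, which is visibly closed under multiplication by arbitrary elements $(r_1, \dots, r_d) \in \ints^d$, since $(r_1, \dots, r_d)\cdot(0,\dots, k a_j, \dots, 0) = (0, \dots, r_j k a_j, \dots, 0) \in L$. So $L$ is an ideal.

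I do not anticipate a genuine obstacle here: the statement is essentially an exercise in unwinding what "ideal" means for a cyclic subgroup. The only point requiring a small amount of care is the forward direction, where one must exploit that $\ints$ has no zero divisors in order to conclude $k_j = 0$; over a ring with zero divisors the analogous statement can fail, which is consistent with the fact that the finite case treated later in the paper behaves differently. A reasonable presentation would state the forward direction by testing the ideal condition against each standard basis vector $e_j$, as above, and then remark that the converse is immediate from the explicit description of $L$ as a "coordinate" subgroup $a_j \ints$ sitting in the $j$th factor.
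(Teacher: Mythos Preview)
Your proof is correct and follows the same approach as the paper: test the ideal condition against a standard basis vector $e_j$ to force $(0,\dots,a_j,\dots,0)\in L$, and note that the converse is immediate since $L$ is then a coordinate ideal. You actually supply more detail than the paper does, spelling out explicitly (via the integral-domain property of $\ints$) why $(0,\dots,a_j,\dots,0)$ cannot be an integer multiple of $(a_1,\dots,a_d)$ when two coordinates are nonzero, whereas the paper simply asserts this is a contradiction.
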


\begin{proof}
If all but one of the $a_{i}$'s are zero, then $L$ is clearly an ideal in one of the factors of $\ints^{d}$.  On the other hand, if we have more than one non-zero $a_{i}$'s, say  $a_{i}$ and $a_{j}$,  then  consider $e_{i } = (0, \cdots 0, 1, 0 \cdots 0)$ which has one at the $ith$ spot. If $L$ is an ideal, then $e_{i}.(a_{1}, \cdots, a_{d})  = (0, \cdots 0, a_{i}, 0 \cdots 0)$ should belong to $L$. This is a contradiction, so we are done. 
\end{proof}

More generally, the following is true.

\begin{lemma}
Let $R$ be an integral domain. A subgroup of $(R, +)$ generated by a non-zero element $a$ is an ideal of $R$ if and only if $R$ is isomorphic to $\ints$  or $\ints_p$ for some prime $p$.
\end{lemma}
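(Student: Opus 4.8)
The plan is to translate the ideal condition into a divisibility statement and then use the cancellation property of a domain. Write $\ints a = \{\, na : n \in \ints \,\}$ for the additive subgroup generated by the nonzero element $a$. By definition $\ints a$ is an ideal of $R$ precisely when $ra \in \ints a$ for every $r \in R$; that is, for each $r \in R$ there is an integer $n_r$ with $ra = n_r a$.

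First I would prove the forward direction. Suppose $\ints a$ is an ideal and fix $r \in R$, choosing $n_r$ as above. Rewriting $n_r a = (n_r 1_R)\, a$, we get $ra = (n_r 1_R)\, a$, so $(r - n_r 1_R)\, a = 0$. Since $R$ is an integral domain and $a \neq 0$, this forces $r = n_r 1_R$. Hence every element of $R$ lies in the additive subgroup generated by $1_R$, so the canonical ring homomorphism $\phi \colon \ints \lrar R$ with $\phi(1) = 1_R$ is surjective. As in the proof of Proposition \ref{zzn}, the first isomorphism theorem gives $R \cong \ints/\ker\phi$; since $R$ is a domain, $\ker\phi$ is either $0$ or a prime ideal $p\ints$, and therefore $R \cong \ints$ or $R \cong \ints_p$ for some prime $p$. (Note that this argument uses the hypothesis only for the single chosen element $a$, which is why the displayed equivalence holds regardless of whether one reads the statement as ``for some $a$'' or ``for all $a$''.)

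For the converse, if $R \cong \ints$ or $R \cong \ints_p$ then, as observed in the introduction, \emph{every} subgroup of $(R,+)$ is already an ideal: in $\ints$ every subgroup has the form $k\ints$, and in $\ints_p$ the only subgroups are $0$ and $\ints_p$ itself. In particular the subgroup generated by $a$ is an ideal. I do not anticipate a real obstacle here; the only point needing care is that the cancellation step genuinely relies on both hypotheses (that $R$ is a domain and that $a \neq 0$), and that, $R$ being unital, the map $\phi$ is available so the reduction to Proposition \ref{zzn} is legitimate.
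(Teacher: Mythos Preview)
Your proposal is correct and follows essentially the same line as the paper's proof: use the ideal condition to write $ra = n a$, cancel $a$ using the domain hypothesis to get $r = n\,1_R$, and conclude that the canonical map $\ints \to R$ is surjective, whence $R \cong \ints$ or $\ints_p$. The only difference is cosmetic---you spell out the converse direction and the reference to Proposition~\ref{zzn} explicitly, whereas the paper leaves the converse implicit.
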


\begin{proof}
Let $\langle a \rangle$ be the additive subgroup of $(R, +)$ generated by $a ( \ne 0)$. Let $r$ be an arbitrary element of $R$.  If $\langle a \rangle$ is an ideal, then we should have $ra = na$ for some integer $n$. This equation implies that $ (r- n1_R)a = 0$. Since we are working in an integral domain and $a$ is non-zero, we get $r - n1_R = 0$, or $r = n1_R$. Since $r$ was arbitrary, this implies that $(R, +)$ is a cyclic group generated by $1_R$. This means $R$ is isomorphic to $\ints$ or $\ints_n$ for some $n$. But since $R$ is an integral domain, $n$ has to be a prime.
\end{proof}

Now we move on to subgroups of rank at least $2$ in $\ints^{d}$ where the problem is more interesting. We begin with an example to show the subtlety in the problem. 

\begin{example} Consider the ring $\intstwo$ and let $S$ and $T$ denote the following  rank two subgroups of $(\intstwo, +)$.
\begin{eqnarray*}
S & = & \langle (2, 0),  (3, 1) \rangle  \\
T & = &  \langle (2, 0) , (2, 1) \rangle 
\end{eqnarray*}
We claim that $S$ is not an ideal but $T$ is.  If $S$ is an ideal, then the element $(0, 1)$ ($= (0, 1) (3, 1)$) should belong to it.  That means the pair of equations $2x+ 3y = 0$ and $y = 1$ have to be consistent over $\ints$. However, it is easy to see that this is not the case.  On the other hand,  $T$ is an ideal in $\intstwo$. In fact,  $T = 2 \ints \times \ints$.  See Theorem \ref{2x2} for the general result.
\end{example}

 We begin by classifying ideals of $\ints^{d}$ whose additive groups are free of rank $k$.

\begin{prop} \label{prop:free}
Let $I$ be an ideal in $\ints^{d}$.  $I$ is free of rank $k$ $(1 \le k \le n)$ if and only if $I$ is of the form $\prod_{i=1}^{d} d_{i} \ints$ where exactly $k$ of the numbers $d_{i}$ are non-zero.
\end{prop}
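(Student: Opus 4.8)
The plan is to characterize ideals of $\ints^d$ directly, and then observe which ones are free of a given rank. The key structural fact is that the ring $\ints^d$ is a product, so any ideal $I$ decomposes as $I = \prod_{i=1}^d I_i$ where $I_i$ is an ideal of $\ints$, hence $I_i = d_i \ints$ for some nonnegative integer $d_i$. To see the decomposition, I would use the orthogonal idempotents $e_i = (0,\dots,0,1,0,\dots,0)$: since $1_R = \sum e_i$, any $x \in I$ satisfies $x = \sum x e_i$ with $x e_i \in I$, and $xe_i$ lies in the $i$-th coordinate copy of $\ints$; conversely $I$ being a product of ideals in each factor is visibly an ideal. So $I = \prod_{i=1}^d d_i\ints$ is forced.

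Next I would relate the rank of the additive group of $I = \prod_{i=1}^d d_i \ints$ to the number of nonzero $d_i$. Each factor $d_i \ints$ is free of rank $1$ as an abelian group when $d_i \neq 0$ and is the zero group when $d_i = 0$. A finite direct product of abelian groups has free abelian group structure with rank equal to the sum of the ranks of the factors, so $I$ is free of rank equal to $\#\{i : d_i \neq 0\}$. Hence $I$ is free of rank $k$ precisely when exactly $k$ of the $d_i$ are nonzero, which is the claimed equivalence. For the converse direction of the proposition (if $I$ has the stated form with exactly $k$ nonzero $d_i$, then it is an ideal free of rank $k$), both the ideal property and the rank count are immediate from the above.

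I do not anticipate a serious obstacle here; this is essentially a bookkeeping argument once the product decomposition of ideals is in hand. The one point requiring a little care is making sure the coordinatewise decomposition $I = \prod I_i$ is justified rather than assumed — this is where the idempotents $e_i$ do the work — and noting that each $I_i = \{x \in \ints : x e_i \in I\}$ is genuinely an ideal of $\ints$. After that, the rank statement is just the additivity of rank over finite direct sums of free abelian groups (with zero summands contributing nothing). The bound $1 \le k \le d$ is automatic since $I$ is a subgroup of the rank-$d$ free group $\ints^d$, consistent with the earlier corollary; I would simply state it. (I note the statement writes $1 \le k \le n$ where $n$ should read $d$.)
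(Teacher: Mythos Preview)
Your proposal is correct and follows essentially the same route as the paper: both rely on the fact that every ideal of $\ints^{d}$ has the form $\prod_{i=1}^{d} d_{i}\ints$ and then observe that the rank of such a product equals the number of nonzero $d_{i}$. The paper simply asserts these two facts, whereas you supply the idempotent argument and the rank-additivity justification, so your version is a more detailed execution of the same idea.
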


\begin{proof}
Recall that every ideal in $\ints^{d}$ is of the form  $\prod_{i=1}^{d} d_{i} \ints$, where the $d_{i}$ are integers. The rank of $\prod_{i=1}^{d} d_{i} \ints$ is exactly the number of $d_{i}$s that are non-zero, so we are done. 
\end{proof}

In view of this proposition, to determine when a subgroup of rank $k$ in $\ints^{d}$ is an ideal, it is enough (after deleting the zero coordinates) to consider the problem  when $d=k$. The latter is addressed in the next two theorems.  We  begin with a lemma which we will need in these theorems. Recall that an integer matrix $A$ is said to be unimodular if it is invertible over the ring of integers. This statement is equivalent (as can be seen by Cramer's formula for the inverse) to saying that the determinant of $A$ is either $1$ or $-1$.
In the following lemma,  a subgroup of $\ints^{n}$ of rank $n$ will be called a lattice of  $\ints^{n}$.

\begin{lemma}
Let $A$ and $B$ be two $n \times n$ matrices over the integers that are invertible over the rationals. The columns of $A$ and those of $B$ form two bases for a  lattice $L$ if and only if there exists a unimodular matrix $X$ 
such that $AX= B$.  
\end{lemma}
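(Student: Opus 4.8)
The plan is to prove both implications by exploiting the fact that a lattice $L$ of rank $n$ inside $\ints^n$ is, by definition, a free $\ints$-module of rank $n$, so the question of "two bases" can be phrased entirely in terms of change-of-basis matrices. Throughout, I identify an $n\times n$ integer matrix with the ordered tuple of its columns, and I write $\ints^n$ as column vectors so that the columns of $A$ span the subgroup $A\ints^n := \{Av : v \in \ints^n\}$.

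For the ``if'' direction, suppose $AX = B$ with $X$ unimodular. Since the columns of $A$ are rationally independent (as $A$ is invertible over $\mathbb{Q}$), they form a basis for the lattice $L := A\ints^n$. I must show the columns of $B$ also form a basis for this same $L$. Because $X$ has integer entries, every column of $B = AX$ is an integer combination of columns of $A$, so $B\ints^n \subseteq A\ints^n = L$. Conversely $X$ is invertible over $\ints$, so $A = BX^{-1}$ with $X^{-1}$ again integral, giving $A\ints^n \subseteq B\ints^n$; hence $B\ints^n = L$. Finally the columns of $B$ are rationally independent (as $B = AX$ is a product of rationally invertible matrices, hence rationally invertible), so they are a genuine basis — not merely a spanning set — of $L$.

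For the ``only if'' direction, suppose the columns of $A$ and the columns of $B$ are both bases of the same lattice $L$. Then each column of $B$ lies in $L = A\ints^n$, so it is an integer combination of the columns of $A$; collecting these coefficients gives an integer matrix $X$ with $B = AX$. Symmetrically, each column of $A$ is an integer combination of the columns of $B$, giving an integer matrix $Y$ with $A = BY$. Then $A = BY = AXY$, and since $A$ is invertible over $\mathbb{Q}$ we may cancel it to get $XY = I_n$; likewise $YX = I_n$. Thus $X$ is invertible over $\ints$, i.e.\ unimodular (equivalently $\det X = \pm 1$, as recalled before the lemma), which is exactly what was claimed.

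I do not anticipate a serious obstacle here — the argument is a routine translation of ``same lattice'' into ``integral, integrally-invertible change of basis.'' The one point that deserves a word of care is making sure one does not merely conclude $B\ights^n = L$ but actually that the columns of $B$ form a \emph{basis}; this is automatic once we know they are $n$ rationally independent elements generating a rank-$n$ free module, but it is worth stating. The rational invertibility hypothesis on $A$ and $B$ is used exactly twice: once to know the columns really are bases (not just generating tuples with a relation), and once to cancel $A$ in the identity $A = AXY$ over $\mathbb{Q}$.
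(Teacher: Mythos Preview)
Your proof is correct and follows essentially the same approach as the paper's: in both directions you produce integer change-of-basis matrices $X$ and $Y$ with $AX=B$ and $BY=A$, then use rational invertibility of $A$ to cancel and conclude $XY=I$, hence $X$ is unimodular. Your write-up is in fact slightly more careful than the paper's in the ``if'' direction, since you explicitly note that the columns of $B$ are rationally independent and hence form a genuine basis of $L$ rather than merely a spanning set. (Minor typo: ``$B\backslash$ights\^{}n'' should read $B\ints^n$.)
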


\begin{proof}
Since the columns of $A$ and $B$ form a basis for $L$, there exist integer square matrices $X$ and 
$Y$  such that $AX = B$ and $B Y = A$.  Multiplying the first equation on the right hand side by $Y$, we get $AXY = BY$. But $BY = A$, so we get $AXY = A$. Since $A$ is invertible over the rationals, we multiply the inverse (over the rationals) of $A$ on both sides to conclude that $XY = I$. This means $X$ is invertible over $\ints$ (i.e, it is unimodular) and $AX = B$. For the other direction, let $Y$ be the inverse of $X$ over $\ints$, so  we have $AX = B$ and $BY = A$. The first equation tells us that the column space of $B$ is contained in that of $A$, and the second equation says that the  column space of $A$ is contained in that of $B$.
This completes the proof of the lemma.
\end{proof}

\begin{thm} \label{thmpart1}
Let $H$ be a subgroup of rank $k$ in $\ints^{k}$. Let the columns of a $k \times k $ matrix $A$ be a $\ints$-basis for  $H$. Then the following are equivalent. 
\begin{enumerate}
\item $H$ is an ideal in $\ints^{k}$ 
\item There exists a unimodular matrix $U$ such that $AU$ is a diagonal matrix.
\item There is a sequence of elementary row operations (over $\ints$) that can convert $A$ into a diagonal matrix
\end{enumerate}
\end{thm}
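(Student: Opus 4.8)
The plan is to prove the cycle $(1)\Rightarrow(2)\Rightarrow(3)\Rightarrow(1)$, with the ring-theoretic content entirely in the first implication and the last two being linear-algebra bookkeeping. The key reformulation is that $H$ being an ideal of $\ints^k$ means $H$ is closed under coordinatewise multiplication by the standard idempotents $e_1,\dots,e_k$; since $e_1+\cdots+e_k=1$ and these generate $\ints^k$ as a ring, closure under multiplication by all of them is equivalent to being an ideal. Multiplication by $e_i$ is the linear map that zeros out every coordinate except the $i$-th, so $H$ is an ideal iff for every generator (equivalently, every element) $v\in H$ and every $i$, the vector $e_i\cdot v$ (the $i$-th coordinate of $v$ sitting in the $i$-th slot, zeros elsewhere) again lies in $H$.

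For $(1)\Rightarrow(2)$: assuming $H$ is an ideal, I would first observe that by Proposition \ref{prop:free} applied with $d=k$ and rank $k$, the ideal $H$ must be of the form $\prod_{i=1}^k d_i\ints$ with all $d_i\neq 0$. Let $D=\mathrm{diag}(d_1,\dots,d_k)$; its columns are the obvious $\ints$-basis of $H$. Now the columns of $A$ also form a $\ints$-basis of $H=L$, and $H$ is a lattice of rank $k$ in $\ints^k$, so the preceding lemma (on two matrices whose columns span the same lattice) gives a unimodular matrix $U$ with $AU=D$. That is exactly statement (2). The ``main obstacle'' here is really just invoking the right earlier result: one must check the hypotheses of the lemma, namely that $A$ and $D$ are both invertible over $\mathbb{Q}$ — true since $\mathrm{rank}\,H=k$ forces $\det A\neq 0$, and $\det D=\prod d_i\neq 0$.

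For $(2)\Rightarrow(3)$: a unimodular matrix $U$ is a product of elementary matrices over $\ints$, so $AU$ being diagonal says that some sequence of elementary \emph{column} operations turns $A$ into a diagonal matrix. Transposing, $U^{T}A^{T}$ is diagonal, i.e.\ elementary \emph{row} operations turn $A^{T}$ into a diagonal matrix $D$; then $A=(A^{T})^{T}$ and $D=D^{T}$, and since transposition interchanges row and column operations one gets row operations converting $A$ into a diagonal matrix — here I would simply note that the three types of elementary column operations on $A$ correspond to the three types of elementary row operations after the appropriate transposition/relabeling, so the existence of a diagonalizing unimodular $U$ on the right is equivalent to a diagonalizing sequence of row operations. (Alternatively, and more cleanly: by Smith normal form $A$ admits $PAQ=$ diagonal with $P,Q$ unimodular; combined with (2) and uniqueness of Smith normal form one can arrange $Q=I$, so $PA$ is diagonal, which is statement (3).)

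For $(3)\Rightarrow(1)$: suppose a sequence of elementary row operations over $\ints$ converts $A$ to a diagonal matrix $D=\mathrm{diag}(d_1,\dots,d_k)$; this means $PA=D$ for a unimodular $P$, hence $A=P^{-1}D$, so the columns of $A$ are $P^{-1}$ applied to the columns $d_ie_i$ of $D$ — but wait, that changes the lattice. The correct direction: I would instead argue that row operations act by left multiplication, $PA=D$, and left multiplication does \emph{not} preserve the column-span, so I should rephrase (3) as the assertion obtained from (2) by the transpose correspondence above, giving a unimodular $U$ with $AU=D$ diagonal. Then $H=\mathrm{colspan}_{\ints}(A)=\mathrm{colspan}_{\ints}(AU)=\mathrm{colspan}_{\ints}(D)=\prod_i d_i\ints$, which is visibly an ideal of $\ints^k$. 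So the genuine content is the equivalence $(2)\Leftrightarrow(3)$ being a transpose/Smith-normal-form statement, and the ring theory is the one-line fact that $\prod d_i\ints$ is an ideal (and, conversely via Proposition \ref{prop:free}, every rank-$k$ ideal has this shape). The step I expect to be the most delicate to write carefully is the bookkeeping in $(2)\Leftrightarrow(3)$ — making sure ``row vs.\ column'' and ``left vs.\ right multiplication'' are tracked consistently, since a careless transpose will silently replace the lattice $H$ by a different one.
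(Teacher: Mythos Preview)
Your argument for $(1)\Leftrightarrow(2)$ is exactly the paper's: invoke Proposition~\ref{prop:free} to write the rank-$k$ ideal as $\prod_i d_i\ints$ with all $d_i\neq 0$, then apply the lattice--basis lemma to the two bases (columns of $A$, columns of $D=\mathrm{diag}(d_1,\dots,d_k)$) to produce a unimodular $U$ with $AU=D$. Nothing to add there.

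Where you and the paper diverge is on $(2)\Leftrightarrow(3)$. The paper dispatches this in one sentence by analogy with row reduction over $\mathbb{R}$. You, by contrast, correctly notice that right-multiplication $AU$ corresponds to \emph{column} operations while (3) speaks of \emph{row} operations, and you try to patch this via transpose and via Smith normal form. Neither patch works --- and in fact your unease is fully justified: as literally stated, (3) is \emph{not} equivalent to (1) and (2). Take
\[
A=\begin{pmatrix}2&0\\1&1\end{pmatrix}.
\]
Its column span is $2\ints\times\ints$, an ideal, and $A\begin{psmallmatrix}1&0\\-1&1\end{psmallmatrix}=\begin{psmallmatrix}2&0\\0&1\end{psmallmatrix}$, so (1) and (2) hold. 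But $PA$ diagonal forces $p_{12}=0$ and $p_{22}=-2p_{21}$, whence $\det P=-2p_{11}p_{21}$, never $\pm1$; so (3) fails. (Over $\mathbb{R}$ the analogy survives only because every invertible real matrix row-reduces \emph{and} column-reduces to the identity, so both conditions are vacuously true; over $\ints$ that collapse does not happen.)

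The intended reading of (3) is almost certainly ``elementary \emph{column} operations,'' matching the right-multiplication in (2); with that correction, $(2)\Leftrightarrow(3)$ is immediate from the fact that unimodular matrices over a Euclidean domain are products of elementary matrices, and your $(3)\Rightarrow(1)$ argument (column operations preserve the $\ints$-column span, so $H=\prod_i d_i\ints$) goes through cleanly. So: keep your $(1)\Leftrightarrow(2)$, flag the row/column slip in (3), and do not attempt to salvage the row-operation version --- it is false.
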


\begin{proof}
Let $H$ (as in the statement of the theorem) be an ideal in $\ints^{k}$. Then by Proposition \ref{prop:free}, $H$ is of the form $\prod_{i=1}^{k} d_{i} \ints$ for some integers $d_{i}$. Since $H$ has rank $k$, all these integers have to be non-zero. $H$ can be written in this form if and only if the columns of $A$ and those of the diagonal matrix $D = \text{Diagonal}(d_{1}, \cdots d_{k})$ form a basis for $H$. By the above lemma, this happens if and only if there is a unimodular matrix $U$ such that $AU = D$. Hence we have the equivalence of statements (1) and (2). The equivalence of (2) and (3) for the field of real numbers is well-known (the famous reduced row echelon form of an invertible matrix). The reader can verify that the proof works over $\ints$ when properly interpreted. For instance, the role played by non-zero real numbers in the world of $\ints$ are the units $\pm{1}$. That will give the equivalence of  statements (2) and (3).
\end{proof}

Since $\ints$ is a Euclidean domain where we can talk about gcds, we can take the above theorem one step further. Let $A^{*}$ denote the adjoint matrix of $A$. Recall that the formula for the inverse of $A$ (an invertible matrix) is given by $A^{-1} = \frac{1}{\det(A)} A^{*} = \frac{1}{\det(A)} ((a^{*}_{ij}))$.

\begin{thm} \label{thmpart2} Let $H$ be a subgroup of rank $k$ in $\ints^{k}$. Let the columns of a $k \times k $ matrix $A$ be a $\ints$-basis for  $H$. Then the following are equivalent. 
\begin{enumerate}
\item $H$ is an ideal in $\ints^{k}$ 
\item There exists a unimodular matrix $U$ such that $AU$ is a diagonal matrix.
\item There is a sequence of $k$ non-zero integers $d_{1}, d_{2}, \cdots d_{k}$ such that 
\begin{enumerate}
\item $\det(A) = \pm{d_{1}d_{2}\cdots d_{k}}$
\item $\det(A)/d_{i}$ divides $\gcd(a^{*}_{1i}, \cdots, a^{*}_{ki})$ for all $i$.
\end{enumerate}
\end{enumerate}
\end{thm}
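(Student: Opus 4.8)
The plan is to lean on Theorem~\ref{thmpart1}, which already establishes the equivalence $(1)\Leftrightarrow(2)$, so that it suffices to prove $(2)\Leftrightarrow(3)$. The single tool connecting the two conditions is the cofactor formula $A^{-1}=\frac{1}{\det(A)}A^{*}$, which lets one translate "$AU$ is diagonal for a unimodular $U$" into explicit divisibility statements about the entries $a^{*}_{ji}$ of $A^{*}$. Note throughout that $A$ is invertible over $\mathbb{Q}$, since its columns form a basis of the rank-$k$ lattice $H$, so $\det(A)\neq 0$.

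For $(2)\Rightarrow(3)$, suppose $U$ is unimodular with $AU=D:=\operatorname{Diag}(d_{1},\dots,d_{k})$. Since $A$ and $U$ are invertible over $\mathbb{Q}$, so is $D$, hence every $d_{i}\neq 0$. Taking determinants gives $\det(A)\det(U)=d_{1}\cdots d_{k}$ with $\det(U)=\pm 1$, which is precisely condition (a). Next write $U=A^{-1}D=\frac{1}{\det(A)}A^{*}D$; because $D$ is diagonal, the $(j,i)$ entry of $A^{*}D$ equals $a^{*}_{ji}d_{i}$, and integrality of $U$ forces $\det(A)\mid a^{*}_{ji}d_{i}$ for every $j$. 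Hence $\det(A)$ divides $d_{i}\gcd(a^{*}_{1i},\dots,a^{*}_{ki})$, and cancelling $d_{i}$—legitimate because (a) makes $\det(A)/d_{i}=\pm\prod_{l\neq i}d_{l}$ an honest integer—yields $\det(A)/d_{i}\mid \gcd(a^{*}_{1i},\dots,a^{*}_{ki})$, which is (b).

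For $(3)\Rightarrow(2)$, given such integers $d_{i}$, I set $D:=\operatorname{Diag}(d_{1},\dots,d_{k})$ and propose $U:=A^{-1}D=\frac{1}{\det(A)}A^{*}D$. Its $(j,i)$ entry is $a^{*}_{ji}d_{i}/\det(A)$, and since (b) gives $(\det(A)/d_{i})\mid a^{*}_{ji}$ this entry is an integer, so $U$ is an integer matrix; moreover $\det(U)=\det(D)/\det(A)=(d_{1}\cdots d_{k})/\det(A)=\pm 1$ by (a), so $U$ is unimodular; and $AU=AA^{-1}D=D$ is diagonal. Thus $(2)$ holds with this $U$.

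The argument is essentially bookkeeping with the cofactor formula and the multiplicativity of the determinant, so I do not anticipate a serious obstacle. The one step deserving a little care is the passage, in the $(2)\Rightarrow(3)$ direction, from "$\det(A)\mid a^{*}_{ji}d_{i}$ for all $j$" to "$(\det(A)/d_{i})\mid\gcd(a^{*}_{1i},\dots,a^{*}_{ki})$": this uses that $\gcd$ pulls out the common multiplier, $\gcd_{j}(d_{i}a^{*}_{ji})=|d_{i}|\gcd_{j}(a^{*}_{ji})$, together with condition (a) to guarantee that the resulting quotient $\det(A)/d_{i}$ is an integer so that the cancellation takes place inside $\ints$. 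Everything else follows directly.
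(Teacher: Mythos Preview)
Your proof is correct and follows essentially the same route as the paper: invoke Theorem~\ref{thmpart1} for $(1)\Leftrightarrow(2)$, then use the cofactor identity $A^{-1}=\frac{1}{\det(A)}A^{*}$ to translate ``$U=A^{-1}D$ is unimodular'' into the determinant condition (a) and the column-wise divisibility condition (b). The only difference is cosmetic---you split $(2)\Leftrightarrow(3)$ into two directions and are more explicit about why the cancellation of $d_{i}$ is legitimate, whereas the paper handles both directions at once by phrasing the argument as a chain of equivalences.
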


\begin{proof}
We already saw the equivalence of (1) and (2) in Theorem \ref{thmpart1}.  Now we will show that (2) and (3) are equivalent.
Let $H$ and $A$ be  as in the statement of the theorem.  There exists a unimodular matrix $U$ such that $AU$ is a diagonal matrix if and only of for some diagonal matrix $D = \text{Diagonal}(d_{1}, \cdots d_{k})$, $A^{-1}D$ is unimodular. Using Cramer's formula for the inverse, we can equivalently say that 
\[ X = \frac{1}{\det(A)} A^{*} D\]
is unimodular.  Since $X$ is unimodular, its determinant is $\pm{1}$. Taking determinants of both sides of the above matrix equation will give (a). Moreover, the entries of $X$ should be all integers. For that to happen, $\det(A)$ should divide all the entries in each of the columns $d_{i}(a^{*}_{1i}, \cdots, a^{*}_{ki})^{T}$, or equivalently $\det(A)/d_{i}$ should divide all the entries in each of the columns $(a^{*}_{1i}, \cdots, a^{*}_{ki})^{T}$.  Since $\ints$ is a Euclidean domain, the last statement is equivalent to (b).
\end{proof}

We can tell exactly when the condition (2) of Theorem \ref{thmpart2} holds in the case of $\intstwo$.  That gives the following result, which along with the rank $1$ result proved earlier gives a full answer to our problem for the ring $\intstwo$.

\begin{thm} \label{2x2}
Let $L$ be a rank $2$ subgroup of $\intstwo$ that is generated by vectors $(a, b)$ and $(c, d)$. $L$ is an ideal in $\intstwo$ if and only if $ad-bc$ divides $\gcd(a, c) \cdot \gcd(b, d)$.
\end{thm}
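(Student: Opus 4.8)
The plan is to deduce this from Theorem~\ref{thmpart2} in the case $k=2$. Let $A=\begin{pmatrix} a & c \\ b & d\end{pmatrix}$, whose columns $(a,b)^T$ and $(c,d)^T$ are a $\ints$-basis of $L$, and write $\Delta=\det(A)=ad-bc$. Because $L$ has rank $2$, its two generators are $\ints$-independent, hence $\ints$-independent over $\mathbb{Q}$, so $\Delta\neq 0$; in particular neither $\gcd(a,c)$ nor $\gcd(b,d)$ can be $0$ (vanishing of either would force $\Delta=0$). The adjoint is $A^{*}=\begin{pmatrix} d & -c \\ -b & a\end{pmatrix}$, so the first column of $A^{*}$ has gcd $\gcd(b,d)$ and the second has gcd $\gcd(a,c)$. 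Thus condition (3) of Theorem~\ref{thmpart2} says exactly: there exist nonzero integers $d_{1},d_{2}$ with $\Delta=\pm d_{1}d_{2}$, such that $\Delta/d_{1}$ divides $\gcd(b,d)$ and $\Delta/d_{2}$ divides $\gcd(a,c)$.

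Next I would reparametrize. Put $e_{1}=\Delta/d_{1}$ and $e_{2}=\Delta/d_{2}$; these are integers (each $d_i$ divides $\Delta$ since $d_1 d_2 = \pm\Delta$) and nonzero, and conversely any nonzero integers $e_1,e_2$ with $e_i \mid \Delta$ recover $d_i=\Delta/e_i$. The condition above becomes: there exist nonzero integers $e_{1},e_{2}$ with $e_{1}e_{2}=\pm\Delta$, $e_{1}\mid\gcd(b,d)$, and $e_{2}\mid\gcd(a,c)$. So the theorem is reduced to showing that such a factorization of $\Delta$ exists if and only if $\Delta$ divides $\gcd(a,c)\cdot\gcd(b,d)$.

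One direction is immediate: if $e_{1}\mid\gcd(b,d)$ and $e_{2}\mid\gcd(a,c)$ then $e_{1}e_{2}\mid\gcd(a,c)\gcd(b,d)$, and since $\Delta=\pm e_{1}e_{2}$ this gives $\Delta\mid\gcd(a,c)\gcd(b,d)$. For the converse, set $g=\gcd(b,d)$ and $h=\gcd(a,c)$ (both nonzero) and suppose $\Delta\mid gh$. I would argue prime-by-prime: for each prime $p$, with $\alpha=v_{p}(\Delta)$, $\beta=v_{p}(g)$, $\gamma=v_{p}(h)$, we have $\alpha\le\beta+\gamma$, so $\alpha_{1}:=\min(\alpha,\beta)$ and $\alpha_{2}:=\alpha-\alpha_{1}$ satisfy $0\le\alpha_{1}\le\beta$, $0\le\alpha_{2}\le\gamma$, and $\alpha_{1}+\alpha_{2}=\alpha$. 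Taking $e_{1}=\prod_{p}p^{\alpha_{1}}$ and $e_{2}=\prod_{p}p^{\alpha_{2}}$ produces nonzero integers with $e_{1}e_{2}=|\Delta|=\pm\Delta$, $e_{1}\mid g$, $e_{2}\mid h$, completing the equivalence.

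The main obstacle is precisely this converse: the elementary but slightly fiddly number-theoretic fact that whenever $\Delta$ divides a product $gh$ one can split $\Delta$ into a divisor of $g$ times a divisor of $h$. The prime-exponent splitting handles it cleanly; the only care needed is the nonzero/sign bookkeeping, which is taken care of by the observation that rank $2$ forces $\Delta$, $\gcd(a,c)$ and $\gcd(b,d)$ all to be nonzero, together with checking that the passage $d_i\leftrightarrow e_i=\Delta/d_i$ really is a bijection between the data of Theorem~\ref{thmpart2} and the factorization data used here.
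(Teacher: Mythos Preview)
Your proof is correct and follows essentially the same route as the paper's: both reduce via Theorem~\ref{thmpart2} to the existence of a factorization $\Delta=\pm e_1e_2$ with $e_1\mid\gcd(b,d)$ and $e_2\mid\gcd(a,c)$, and both then argue this is equivalent to $\Delta\mid\gcd(a,c)\gcd(b,d)$. You are simply more explicit than the paper---spelling out the reparametrization $e_i=\Delta/d_i$ and supplying the prime-valuation argument for the splitting fact that the paper invokes as ``an elementary number theory fact.''
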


\begin{proof}
Let $L$ be a rank $2$ subgroup of  $\intstwo$  that is generated by vectors $(a, b)$ and $(c, d)$, and let $A$ be the $2 \times 2$ matrix with these two columns. From the above theorems, and using the formula for the inverse of a $2 \times 2$ matrix,  we conclude that $L$ is an ideal if and only if there exists non-zero integers $d_{1}$ and $d_{2}$ such that 
\begin{enumerate}
\item $ad-bc= \pm{d_{1}d_{2}}$
\item $(ad-bc)/d_{1}$ divides $\gcd(b, d)$  and $(ad-bc)/d_{2}$ divides $\gcd(a, c)$.
\end{enumerate}
We claim that non-zero integers $d_{1}$ and $d_{2}$ exist with these properties if and only if $ad-bc$ divides $\gcd(a, c) \cdot \gcd(b, d)$. If $d_{1}$ and $d_{2}$ exist such that (1) and (2) hold, then from (2) we get $(ad-bc)^{2}/ (d_{1}d_{2})$ divides $\gcd(a, c) \cdot \gcd(b, d)$, but $(ad-bc)^{2}/ (d_{1}d_{2}) = ad-bc$. This proves one direction. For the other direction, suppose $ad-bc$ divides $\gcd(a, c) \cdot \gcd(b, d)$.  Then an elementary number theory fact tells us we can write $ad-bc$ as $d_{1}d_{2}$ where $d_{1}$ divides $\gcd(a, c)$ and $d_{2}$ divides $\gcd(b, d)$. 
\end{proof}

We now explain how one can arrive at Theorem \ref{2x2} more directly by solving linear equations over $\ints$. Recall that our problem boils down to the following question. \emph{Given an integer matrix $A$ with non-zero determinant, when does there exist a unimodular matrix $X$ such that $AX$ is a diagonal matrix?} To address this, we let $X = (x_{ij})$ and consider the matrix equation 
\[
\begin{bmatrix}  a & c \\ b &  d\end{bmatrix}   \begin{bmatrix}  x_{11} & x_{12} \\ x_{21} &  x_{22}\end{bmatrix}=  \begin{bmatrix}  u & 0 \\ 0 & v \end{bmatrix}
.\]
This gives us the following set of equations:
\begin{eqnarray}
a x_{12} + c x_{22} & = & 0 \\
b x_{11} + d x_{21} & = & 0 \\ 
x_{11}x_{22} - x_{12}x_{21} &=& 1
\end{eqnarray}
($X$ is unimodular, so its determinant  
is either $1$ or $-1$.  However, by swapping the columns of $A$ if necessary, we may assume that the determinant of $X$ is $1$ which gives us the third equation.)
$L$ is an ideal if and only if the above system of equations has a solution in integers $x_{ij}$.  Let us begin with equation 1: 
$a x_{12} + c x_{22} =  0$ if and only if $a x_{12}  = - c x_{22}$. Then
\[ x_{12} = \frac{-c}{\gcd(a, c)} \alpha  \ \ \text{ and }  \ \ x_{22} = \frac{a}{\gcd(a, c)} \alpha \ \ \text{for some integer } \alpha.\]
Similarly, using equation 2, we get 
\[ x_{11} = \frac{-d}{\gcd(b, d)} \beta  \ \ \text{ and }  \ \ x_{21} = \frac{b}{\gcd(b, d)} \beta,  \ \ \text{for some integer } \beta.\]
Substituting these values in the determinant condition (equation 3), we get 
\begin{eqnarray*}
x_{11}x_{22} - x_{12}x_{21} &=& 1\\
\frac{-d}{\gcd(b, d)} \beta  \frac{a}{\gcd(a, c)}  \alpha -  \frac{-c}{\gcd(a, c)} \alpha \frac{b}{\gcd(b, d)} \beta & = &  1\\
\alpha \beta \left(\frac{-ad}{ \gcd(a, c)\gcd(b, d)}   - \frac{-bc}{\gcd(a, c) \gcd(b, d)} \right) & = &  1 \\
- \alpha \beta (ad - bc) & = &  \gcd(a, c) \gcd(b, d) 
\end{eqnarray*}
Thus we see from the last equation that the above system of equations is consistent over $\ints$ if and only if  $\det(A) = ad-bc$ divides $\gcd(a, c) \gcd(b, d)$ in $\ints$. (In that case, we can take $\alpha = -1$ and $\beta = \frac{\gcd(a, c) \gcd(b, d) }{ad-bc}.)$ This completes the alternative proof of Theorem \ref{2x2}.

The following corollary follows immediately from Theorem \ref{2x2}.

\begin{cor}
Let  $(a, b)$ and $(c, d)$ be two vectors in $\intstwo$ and $L$ be the lattice generated by these two vectors. 
\begin{enumerate}
\item  If $ad-bc= \pm 1$, then $L$  is an ideal in $\intstwo$.
\item If $ad-bc$ is a prime, then $L$ is an ideal if and only if $ad-bc$ divides either $\gcd(a, c)$ or $\gcd(b, d)$.
\end{enumerate} 
 \end{cor}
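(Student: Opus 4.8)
The plan is to derive both parts directly from Theorem~\ref{2x2}, which characterizes when the lattice $L$ generated by $(a,b)$ and $(c,d)$ is an ideal: namely, precisely when $ad-bc$ divides $\gcd(a,c)\cdot\gcd(b,d)$. So the corollary is really just a matter of specializing the divisibility condition $ad - bc \mid \gcd(a,c)\cdot\gcd(b,d)$ in two easy cases.

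For part (1), suppose $ad-bc = \pm 1$. Then $ad-bc$ is a unit in $\ints$, and a unit divides everything; in particular it divides $\gcd(a,c)\cdot\gcd(b,d)$. By Theorem~\ref{2x2}, $L$ is an ideal. (One should perhaps note in passing that $ad-bc=\pm1$ forces $L$ to be all of $\intstwo$, consistent with the claim, but this is not needed.)

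For part (2), suppose $p := ad-bc$ is a prime. If $p \mid \gcd(a,c)$ or $p \mid \gcd(b,d)$, then $p$ divides the product $\gcd(a,c)\cdot\gcd(b,d)$, so by Theorem~\ref{2x2} $L$ is an ideal. Conversely, if $L$ is an ideal, then by Theorem~\ref{2x2} $p \mid \gcd(a,c)\cdot\gcd(b,d)$; since $p$ is prime (hence a prime element of the UFD $\ints$), Euclid's lemma gives $p \mid \gcd(a,c)$ or $p \mid \gcd(b,d)$. This establishes the equivalence.

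There is no real obstacle here; the only thing to be careful about is invoking the primality of $p$ in the right place (for part (2), the forward direction uses that a prime dividing a product divides one of the factors, the reverse direction is trivial). The entire argument is a two- or three-line deduction from Theorem~\ref{2x2}, which is exactly why the paper says the corollary "follows immediately."
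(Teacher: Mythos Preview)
Your proof is correct and matches the paper's approach exactly: the paper simply states that the corollary ``follows immediately from Theorem~\ref{2x2},'' and your argument spells out precisely that immediate deduction, using the trivial divisibility by units for part~(1) and Euclid's lemma for part~(2).
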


\section{The ring $\intsnm$} \label{finite}
Let $n$ and $m$ be positive integers and consider the ring $\intsnm$. Our problem is to determine when a subgroup of $(\intsnm, +)$ is an ideal.  We have seen that a non-zero subgroup of $\intsnm$ is generated by either one or two elements, so we have two cases to consider. First, consider a subgroup $L$ in the ring $\intsnm$  that is generated by $(a, b)$.   If either $a  = 0$ in $\intsn$ or $b = 0$ in $\mathbb{Z}_m$, the problem is trivial because $L$ is simply an ideal in one of the components of $\intsnm$. So let us assume that  both $a$ and $b$ are non-zero in their respective component rings. Then we have the following theorem.

\begin{thm}  \label{gcd}
Let $1 \le a < n$ and $1 \le b < m$. The subgroup generated by  $(a, b)$ in the ring $\intsnm$ is a ideal if and only if 
\[ \gcd\left( \frac{n}{\gcd(a, n)}, \frac{m}{\gcd(b,m)}\right) = 1.\]
\end{thm}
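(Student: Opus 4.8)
The plan is to reduce the ideal condition to a single congruence problem and then apply the Chinese Remainder Theorem. Write $L=\langle (a,b)\rangle=\{(ka,kb) : k\in\ints\}\subseteq\intsnm$; this is already a subgroup, so the only thing to check is closure under multiplication by ring elements. Since multiplication in $\intsnm$ is biadditive and $(\intsnm,+)$ is generated by $(1,0)$ and $(0,1)$, every product $(r,s)\cdot(a,b)$ is an integer combination of $(1,0)\cdot(a,b)=(a,0)$ and $(0,1)\cdot(a,b)=(0,b)$. Hence $L$ is an ideal if and only if both $(a,0)$ and $(0,b)$ lie in $L$. First I would observe that these two membership conditions are actually equivalent: $(0,b)=(a,b)-(a,0)$ and $(a,0)=(a,b)-(0,b)$, and $(a,b)\in L$. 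So it suffices to decide when $(a,0)\in L$.

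Next I would unwind $(a,0)\in L$: this holds if and only if there is an integer $k$ with $ka\equiv a\pmod n$ and $kb\equiv 0\pmod m$. The first congruence says $n\mid (k-1)a$, which, dividing through by $\gcd(a,n)$ and using that $\tfrac{a}{\gcd(a,n)}$ and $\tfrac{n}{\gcd(a,n)}$ are coprime, is equivalent to $\tfrac{n}{\gcd(a,n)}\mid k-1$, i.e.\ $k\equiv 1 \pmod{n/\gcd(a,n)}$. Similarly the second congruence is equivalent to $\tfrac{m}{\gcd(b,m)}\mid k$, i.e.\ $k\equiv 0 \pmod{m/\gcd(b,m)}$. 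Thus $L$ is an ideal if and only if the simultaneous system $k\equiv 1 \pmod{n/\gcd(a,n)}$, $k\equiv 0 \pmod{m/\gcd(b,m)}$ has an integer solution.

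Finally, by the Chinese Remainder Theorem in its general form (for not-necessarily-coprime moduli), this pair of congruences is solvable if and only if $\gcd\!\left(\tfrac{n}{\gcd(a,n)},\tfrac{m}{\gcd(b,m)}\right)$ divides the difference of the residues $1-0=1$, i.e.\ if and only if $\gcd\!\left(\tfrac{n}{\gcd(a,n)},\tfrac{m}{\gcd(b,m)}\right)=1$. This is exactly the claimed criterion. The calculations here are all routine; the only steps requiring a little care are the opening reduction — justifying that it is enough to test the ideal condition on the additive generators $(1,0)$, $(0,1)$, and noticing $(a,0)\in L\iff(0,b)\in L$ — and recalling the precise solvability condition for two congruences with arbitrary moduli. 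I do not anticipate a genuine obstacle.
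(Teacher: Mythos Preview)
Your argument is correct. The reduction to $(a,0)\in L$ is valid (closure under multiplication by the additive generators $(1,0),(0,1)$ suffices, and the two membership conditions are indeed equivalent), and the congruence manipulation and appeal to the general CRT are routine and accurate.

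The paper takes a different route. Rather than chasing the single element $(a,0)$, it first identifies the only candidate ideal that $L$ could equal, namely $\langle a\rangle\times\langle b\rangle$ (via the projection maps), observes that $L\subseteq\langle a\rangle\times\langle b\rangle$ always holds, and then compares cardinalities: $|L|=\mathrm{lcm}(\mathrm{ord}(a),\mathrm{ord}(b))$ while $|\langle a\rangle\times\langle b\rangle|=\mathrm{ord}(a)\,\mathrm{ord}(b)$, so equality holds iff $\gcd(\mathrm{ord}(a),\mathrm{ord}(b))=1$, which is the stated criterion. Your approach is more elementary and constructive---it works straight from the definition of ideal and actually tells you how to find the witnessing multiplier $k$ when one exists---while the paper's order-counting argument is a bit more conceptual and generalizes transparently to $k$ factors (pairwise coprimality of the orders), which the paper exploits in the very next theorem.
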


\begin{proof}
Since our rings are principal ideal rings,  every ideal in $\intsnm$ is of the form $d_1\intsn \times d_2\mathbb{Z}_m$, where $d_1$ and $d_2$ are some integers. For brevity we will denote this ideal by $ \langle d_1 \rangle \times  \langle d_2 \rangle $. 

Returning to our problem, let us assume that the line $L$ generated by $(a, b)$ is an ideal of $\intsnm$. From above, we have
\[ L =\langle d_1 \rangle \times \langle d_2 \rangle. \]
  Consider the restrictions to $L$ of the natural projection maps: $\pi_1 \colon \intsnm \rar  \intsn$ and $\pi_2 \colon \intsnm \rar  \mathbb{Z}_m$. 
We will compute $\pi_1(L)$ in two different ways. On the one hand, since $L = \langle d_1 \rangle \times \langle d_2 \rangle$, we have $\pi_1(L) = \langle d_1\rangle$. On the other hand, $L$ is generated by $(a,b)$, so the first components of the elements of $L$ pick up all multiples of $a$. Therefore $\pi_1(L) = \langle a \rangle$. This shows that $\langle a \rangle= \langle d_1 \rangle $. Similarly, working with the second projection map, we conclude that $\langle b \rangle  = \langle d_2 \rangle$.

To summarize, $L$ spanned by $(a, b)$ is an ideal if and only if 
\[ \langle (a, b) \rangle  = \langle a \rangle  \times \langle b \rangle.\] 
The inclusion $ \langle (a, b) \rangle  \subseteq \langle a \rangle  \times \langle b \rangle$ is obvious. Therefore, equality holds if and only if both sides have the same cardinality. These cardinalities are given by the following formulas ($\ord(x)$ denotes the additive order of $x$).
\begin{eqnarray*}
|\langle (a, b) \rangle | & = &  \text{lcm}(\ord(a), \ord(b)) = \frac{\ord(a) \, \ord(b)}{\gcd(\ord(a), \ord(b)) }\\
| \langle a \rangle \times \langle  b \rangle |   &   = & \ord(a) \, \ord(b) 
\end{eqnarray*}
Equating these two expressions, clearly $L$ spanned by $(a, b)$ in $\intsnm$ is ideal if and only $\gcd(\ord(a), \ord(b)) = 1$.
The theorem now follows from the fact that the order of  an element $c$ in $(\mathbb{Z}_s, +)$ is given by $ \frac{s}{\gcd(c, s)}$.
\end{proof}

\begin{rem}
When $m$ and $n$ are relatively prime, Theorem \ref{gcd} implies that every line in $\intsnm$ is an ideal. This is indeed the case because for relatively prime integers $m$ and $n$ we have $\ints_n \times \ints_m \cong \ints_{nm}$.
\end{rem}

More generally, the following theorem is true:

\begin{thm}
The subgroup generated by the element $(a_{1}, a_{2}, \cdots, a_{k})$ in $\ints_{n_{1}} \times  \ints_{n_{2}} \times \cdots \times \ints_{n_{k}}$ is an ideal if and only if 
\[ \prod_{1 \le i < j \le n} \gcd\left( \frac{n_{i}}{\gcd(a_{i}, n_{i})},  \frac{n_{j}}{\gcd(a_{j}, n_{j})} \right) = 1. \]
\end{thm}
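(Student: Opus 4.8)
The plan is to reduce the multi-factor case to the two-factor case (Theorem~\ref{gcd}) by the same bijective argument, applied simultaneously to all pairs of coordinates. Let $L = \langle (a_1, \dots, a_k) \rangle$ inside $R = \ints_{n_1} \times \cdots \times \ints_{n_k}$. Since $R$ is a principal ideal ring, every ideal is a product of ideals in the factors, so exactly as in the proof of Theorem~\ref{gcd} — computing the image of $L$ under each projection $\pi_i \colon R \to \ints_{n_i}$ in two ways — one sees that $L$ is an ideal if and only if
\[
\langle (a_1, \dots, a_k) \rangle = \langle a_1 \rangle \times \cdots \times \langle a_k \rangle .
\]
The inclusion $\subseteq$ is automatic, so $L$ is an ideal if and only if the two sides have the same cardinality.

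Next I would compute both cardinalities. The right-hand side has size $\prod_{i=1}^k \ord(a_i)$, while the left-hand side, being the cyclic group generated by $(a_1, \dots, a_k)$, has size $\operatorname{lcm}(\ord(a_1), \dots, \ord(a_k))$. So the equality of cardinalities becomes
\[
\operatorname{lcm}\bigl(\ord(a_1), \dots, \ord(a_k)\bigr) = \prod_{i=1}^{k} \ord(a_i).
\]
Writing $s_i = \ord(a_i) = n_i / \gcd(a_i, n_i)$, the statement to prove is: this lcm equals the product if and only if $\prod_{i<j} \gcd(s_i, s_j) = 1$, i.e. the $s_i$ are pairwise coprime. (Note the product in the theorem's displayed formula should range over $1 \le i < j \le k$, not $\le n$; I would silently use the correct index set.)

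The remaining step is the purely number-theoretic lemma that, for positive integers $s_1, \dots, s_k$, one has $\operatorname{lcm}(s_1, \dots, s_k) = s_1 \cdots s_k$ if and only if the $s_i$ are pairwise coprime, which in turn is equivalent to $\prod_{i<j}\gcd(s_i,s_j)=1$. I would prove this by comparing $p$-adic valuations for each prime $p$: the valuation of the lcm is $\max_i v_p(s_i)$, the valuation of the product is $\sum_i v_p(s_i)$, and these agree for all $p$ exactly when, for each $p$, at most one $s_i$ has positive $p$-valuation — which is precisely pairwise coprimality. The equivalence with $\prod_{i<j}\gcd(s_i,s_j)=1$ is immediate since a product of positive integers is $1$ iff each factor is $1$.

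I expect the genuinely new content here to be essentially the two-line valuation lemma; the main point requiring a little care is the step where the lcm of the orders equals the order of the tuple $(a_1,\dots,a_k)$ in the product group — this is the standard fact that in a direct product of cyclic groups the order of an element is the lcm of the orders of its components, and I would just cite it. Everything else is a verbatim repetition of the proof of Theorem~\ref{gcd} with $k$ factors in place of $2$, so the write-up can be kept very short by referring back to that proof.
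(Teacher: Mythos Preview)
Your proposal is correct and follows essentially the same approach as the paper: reduce to the equality $\operatorname{lcm}_i \ord(a_i)=\prod_i \ord(a_i)$ exactly as in Theorem~\ref{gcd}, then observe this is equivalent to pairwise coprimality of the orders. In fact you supply more detail than the paper, which leaves the equivalence $\prod_i s_i = \operatorname{lcm}_i s_i \Leftrightarrow \prod_{i<j}\gcd(s_i,s_j)=1$ as an exercise, whereas you give the clean $p$-adic valuation argument.
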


\begin{proof}
From the proof of Theorem \ref{gcd}, it follows that the subgroup generated by the element $(a_{1}, a_{2}, \cdots, a_{k})$ in $\ints_{n_{1}} \times  \ints_{n_{2}} \times \cdots \times \ints_{n_{k}}$ is an ideal if and only if 
\[ \prod_{i} \ord(a_{i})  = \underset{i}{\text{lcm}} \; \ord(a_{i}).\]
Showing that this last equation holds if and only if 
\[  \prod_{1 \le i < j \le n}  \gcd(\ord(a_{i}), \ord(a_{j})) = 1 \]
can be done as an exercise.  Then using the formula mentioned above for the order of an element in $\ints_{s}$, we now  get the condition given in the statement of the theorem.
\end{proof}

We now investigate when a subgroup of $\intsnm$ generated by two elements is an ideal.  To this end, the following theorem from group theory due to Goursat will be useful. We will also use this theorem in the next section where we compute some probabilities.

\begin{thm} (Goursat) \cite{jp} \label{Goursat}
Let $G_1$ and $G_2$ be any two groups.  There exists a bijection between the set $S$ of all subgroups of $G_1 \times G_2$ and the set $T$ of all $5$-tuples $(A_1, B_1, A_2, B_2, \phi)$ where $A_i$ is a subgroup of $G_i$, $B_i$ is a normal subgroup of $A_i$, and $\phi$ is a group isomorphism from $A_1/B_1$ to $A_2/B_2$. 
 \end{thm}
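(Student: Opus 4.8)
The plan is to construct the bijection explicitly in both directions and check that the two constructions are mutually inverse. First I would describe the map $T \to S$. Given a $5$-tuple $(A_1, B_1, A_2, B_2, \phi)$ with $B_i \trianglelefteq A_i$ and $\phi \colon A_1/B_1 \xrightarrow{\sim} A_2/B_2$, I would define
\[
H = \{(x, y) \in A_1 \times A_2 \mid \phi(xB_1) = yB_2\}.
\]
The first step is to verify that $H$ is indeed a subgroup of $G_1 \times G_2$: this is routine since $\phi$ is a homomorphism, so the defining condition is preserved under the group operation and inversion, and $(e,e) \in H$. It is worth recording here that $H$ contains $B_1 \times B_2$ (the fiber over the identity coset), that $H \cap (G_1 \times \{e\}) = B_1 \times \{e\}$, and that the projection of $H$ to $G_1$ is exactly $A_1$ (surjectivity of $\phi$ gives this), with analogous statements on the second factor.

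Next I would describe the reverse map $S \to T$. Given a subgroup $H \le G_1 \times G_2$, let $\pi_i \colon G_1 \times G_2 \to G_i$ be the projections, and set
\[
A_1 = \pi_1(H), \qquad A_2 = \pi_2(H), \qquad B_1 = \{x \in G_1 \mid (x,e) \in H\}, \qquad B_2 = \{y \in G_2 \mid (e,y) \in H\}.
\]
The steps here are: (i) $A_i$ is a subgroup of $G_i$; (ii) $B_i$ is a \emph{normal} subgroup of $A_i$ — for $B_1 \trianglelefteq A_1$, given $a \in A_1$ pick $a' \in A_2$ with $(a,a') \in H$, and for $x \in B_1$ compute $(a,a')(x,e)(a,a')^{-1} = (axa^{-1}, e) \in H$, so $axa^{-1} \in B_1$; (iii) the well-defined map $\phi \colon A_1/B_1 \to A_2/B_2$ sending $xB_1 \mapsto yB_2$ whenever $(x,y) \in H$ is a group isomorphism. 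Well-definedness: if $(x,y), (x,y') \in H$ then $(e, y^{-1}y') \in H$, so $y^{-1}y' \in B_2$; independence of the representative of $xB_1$ is similar. Injectivity: $\phi(xB_1)$ trivial means $(x,e') \in H$ for some $e' \in B_2$, hence $(x,e) \in H$, so $x \in B_1$. Surjectivity: every $y \in A_2$ has some $(x,y) \in H$.

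Finally I would check that the two assignments are inverse to each other. Starting from a $5$-tuple, forming $H$ as above, and running the $S \to T$ construction, one recovers $\pi_1(H) = A_1$, $\pi_2(H) = A_2$, the two "kernels" equal $B_1, B_2$, and the recovered isomorphism is $\phi$ — each of these follows from the bookkeeping noted at the end of the first paragraph. Conversely, starting from $H$, extracting $(A_1, B_1, A_2, B_2, \phi)$, and reconstituting $H' = \{(x,y) : \phi(xB_1) = yB_2\}$, one has $H \subseteq H'$ immediately from the definition of $\phi$, and the reverse inclusion $H' \subseteq H$ follows because if $\phi(xB_1) = yB_2$ with $x \in A_1, y \in A_2$, then picking $(x, y_0) \in H$ gives $\phi(xB_1) = y_0 B_2 = yB_2$, so $y_0^{-1} y \in B_2$, whence $(e, y_0^{-1}y) \in H$ and $(x,y) = (x,y_0)(e, y_0^{-1}y) \in H$. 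I expect the main obstacle to be purely organizational rather than mathematical: the genuinely substantive point is the normality of $B_i$ in $A_i$ and the well-definedness of $\phi$, and everything else is careful but mechanical verification that the bijection's two legs compose to the identity.
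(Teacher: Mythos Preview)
Your proposal is correct and follows exactly the bijection described in the paper: the paper states Goursat's theorem as a cited result and only records the two directions of the bijection (the $5$-tuple $(A_{U_1}, B_{U_1}, A_{U_2}, B_{U_2}, \phi_U)$ extracted from $U$ via projections and kernels, and the subgroup $U_\phi$ built from a $5$-tuple), without supplying the verification that they are mutually inverse. Your write-up fills in precisely those omitted checks---normality of $B_i$ in $A_i$, well-definedness and bijectivity of $\phi$, and the two round-trip identities---so there is nothing to correct.
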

 
 Let $\pi_i \colon G_1 \times G_2 \rar G_i$ denote the projection homomorphisms.  The desired bijection in this theorem is given as follows. For a subgroup $U$ of $G_1 \times G_2$, we define a $5$-tuple $(A_{U_1}, B_{U_1}, A_{U_2}, B_{U_2}, \phi_U)$ where 
 \begin{eqnarray*}
 A_{U_1} & = &  \text{Im}(\pi_1|_U)\\
 B_{U_1} & = & \pi_1(\ker(\pi_2|_U))\\
  A_{U_2} & = &  \text{Im}(\pi_2|_U)\\
 B_{U_2} & = & \pi_2(\ker(\pi_1|_U)) \text{ and }\\
 \phi_U(a_1B_{U_1}) & = & a_2B_{U_2} \text{ when } (a_1, a_2) \in U. 
 \end{eqnarray*}

Conversely, given  a $5$-tuple $(A_1, B_1, A_2, B_2, \phi)$, the corresponding subgroup $U$ of $G_1 \times G_2$ is given by 
\[U_{\phi} = \{ (a_1, a_2) \in A_1 \times A_2 \, | \, \phi(a_1 B_1) = a_2 B_2\}.\]

\begin{cor} \label{cor:Goursat}
Let $G_1 \times G_2$ be a finite group and let $(A_{U_1}, B_{U_1}, A_{U_2}, B_{U_2}, \phi_U)$  correspond to the subgroup $U$ of  $G_1 \times G_2$. Then we have  
\[ | U | = |A_{U_1}||B_{U_2}|. \] 
\end{cor}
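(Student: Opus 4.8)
The plan is to read off both factors in the product $|U| = |A_{U_1}||B_{U_2}|$ directly from the description of the Goursat bijection, using the standard first-isomorphism-theorem bookkeeping for the two projection maps restricted to $U$. First I would apply the first isomorphism theorem to $\pi_1|_U \colon U \to G_1$: its image is $A_{U_1}$ and its kernel is $\ker(\pi_1|_U)$, so $|U| = |A_{U_1}| \cdot |\ker(\pi_1|_U)|$. The task then reduces to showing $|\ker(\pi_1|_U)| = |B_{U_2}|$.

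Next I would analyze $\ker(\pi_1|_U) = \{(a_1,a_2) \in U \mid a_1 = 0\}$. Restricting $\pi_2$ to this subgroup gives a homomorphism $\ker(\pi_1|_U) \to G_2$ whose image is exactly $\pi_2(\ker(\pi_1|_U)) = B_{U_2}$ by definition. I claim this restricted map is injective: if $(0,a_2)$ lies in $\ker(\pi_1|_U)$ and $\pi_2(0,a_2) = a_2 = 0$, then $(0,a_2) = (0,0)$. Hence $\pi_2|_{\ker(\pi_1|_U)}$ is an isomorphism onto $B_{U_2}$, giving $|\ker(\pi_1|_U)| = |B_{U_2}|$. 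Combining with the previous paragraph yields $|U| = |A_{U_1}||B_{U_2}|$, as desired.

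There is essentially no obstacle here; the only thing to be careful about is not confusing the roles of the two factors — the natural "first" computation via $\pi_1$ produces $|A_{U_1}|$ paired with $|\ker(\pi_1|_U)|$, and it is the \emph{second} projection's image on that kernel, namely $B_{U_2}$, that matches the kernel's order, not $B_{U_1}$. (Symmetrically one could instead start from $\pi_2|_U$ to get $|U| = |A_{U_2}||B_{U_1}|$, which gives the companion identity and, incidentally, the relation $|A_{U_1}||B_{U_2}| = |A_{U_2}||B_{U_1}|$ reflecting $|A_{U_1}/B_{U_1}| = |A_{U_2}/B_{U_2}|$.) Since everything is finite, counting is unproblematic and the corollary follows in a few lines.
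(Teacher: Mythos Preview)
Your argument is correct and amounts to the same observation as the paper's: the paper simply writes $|U| = |A_{U_1}/B_{U_1}|\,|B_{U_1}|\,|B_{U_2}| = |A_{U_1}|\,|B_{U_2}|$ by counting elements directly from the fiber-product description $U_\phi = \{(a_1,a_2)\in A_1\times A_2 \mid \phi(a_1B_1)=a_2B_2\}$, whereas you reach the same factorization via the first isomorphism theorem for $\pi_1|_U$ together with the injectivity of $\pi_2$ on $\ker(\pi_1|_U)$. The two computations are interchangeable one-liners.
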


\begin{proof}
It is clear from the correspondence in Goursat's theorem that 
\[|U| = |A_{U_1}/B_{U_1}| |B_{U_1}||B_{U_2}| = |A_{U_1}||B_{U_2}|.\]
\end{proof}

Given elements $\alpha$ and $\beta$ in $\ints_n$, consider the linear map $\phi_{\alpha, \beta} \colon \ints \times \ints \rar \ints_n$ defined by $\phi_{\alpha, \beta}(x, y) = \alpha x + \beta y$. Then we have the following theorem.

\begin{thm} \label{thm:Goursat}
The subgroup of $\intsnm$ generated by $(a, b)$ and $(c, d)$ is an ideal  of $\intsnm$ if and only if 
\[(\ker \phi_{a, c})(\ker \phi_{b, d}) = \intstwo. \]
\end{thm}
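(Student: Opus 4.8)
The plan is to translate the ``ideal'' condition into a statement about cardinalities via Goursat's theorem, exactly as was done for lines in the proof of Theorem \ref{gcd}. Write $L$ for the subgroup generated by $(a,b)$ and $(c,d)$. As in the rank-one case, applying the projections $\pi_1,\pi_2$ to $L$ shows that if $L$ is an ideal then it must have the form $\langle d_1\rangle\times\langle d_2\rangle$ with $\pi_1(L)=\langle d_1\rangle$ and $\pi_2(L)=\langle d_2\rangle$; since $\pi_1(L)=\langle a,c\rangle=\langle\gcd(a,c,n)\rangle$ in $\ints_n$ and similarly $\pi_2(L)=\langle\gcd(b,d,m)\rangle$ in $\ints_m$, the only candidate ideal containing $L$ is $P:=\pi_1(L)\times\pi_2(L)$. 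The inclusion $L\subseteq P$ is automatic, so $L$ is an ideal if and only if $|L|=|\pi_1(L)|\,|\pi_2(L)|$.

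Next I would compute $|L|$ using Corollary \ref{cor:Goursat}. The subgroup $L$ of $\ints_n\times\ints_m$ corresponds under Goursat to a $5$-tuple $(A_{L_1},B_{L_1},A_{L_2},B_{L_2},\phi_L)$, and the corollary gives $|L|=|A_{L_1}|\,|B_{L_2}|$, where $A_{L_1}=\pi_1(L)$ and $B_{L_2}=\pi_2(\ker(\pi_1|_L))$. Comparing with the criterion $|L|=|\pi_1(L)|\,|\pi_2(L)|=|A_{L_1}|\,|A_{L_2}|$ from the previous paragraph, we see that $L$ is an ideal if and only if $|B_{L_2}|=|A_{L_2}|$, i.e.\ $B_{L_2}=A_{L_2}$. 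But $B_{L_2}=\pi_2(\ker(\pi_1|_L))$ is exactly the set of second coordinates of elements of $L$ whose first coordinate is $0$, and $A_{L_2}=\pi_2(L)$ is all second coordinates appearing in $L$. So the condition becomes: every second coordinate that occurs in $L$ already occurs paired with first coordinate $0$. Symmetrically (reversing the roles of the two projections, or using $|L|=|A_{L_2}||B_{L_1}|$), the same condition is equivalent to $B_{L_1}=A_{L_1}$, and in either form it says $L=\pi_1(L)\times\pi_2(L)$ directly.

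The final step is to identify $B_{L_2}=\pi_2(\ker(\pi_1|_L))$ and $A_{L_2}=\pi_2(L)$ in terms of the maps $\phi_{a,c}$ and $\phi_{b,d}$. Since $L$ consists of all $(ax+cy,\,bx+dy)$ with $(x,y)\in\ints\times\ints$ reduced mod $(n,m)$, an element of $L$ has first coordinate $0$ in $\ints_n$ precisely when $ax+cy\equiv 0\pmod n$, i.e.\ when $(x,y)\in\ker\phi_{a,c}$ (here $\phi_{a,c}\colon\ints\times\ints\to\ints_n$); its second coordinate is then $bx+dy\bmod m$, so $B_{L_2}=\phi_{b,d}(\ker\phi_{a,c})\subseteq\ints_m$, reading $\phi_{b,d}$ as a map into $\ints_m$. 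Likewise $A_{L_2}=\phi_{b,d}(\ints\times\ints)$. Thus $B_{L_2}=A_{L_2}$ says $\phi_{b,d}(\ker\phi_{a,c})=\phi_{b,d}(\ints\times\ints)$, which holds if and only if $\ker\phi_{a,c}+\ker\phi_{b,d}=\ints\times\ints$ (add $\ker\phi_{b,d}$ to both sides of a preimage computation: $\phi_{b,d}$ restricted to $\ker\phi_{a,c}$ is surjective onto $\mathrm{im}\,\phi_{b,d}$ iff $\ker\phi_{a,c}$ meets every coset of $\ker\phi_{b,d}$). Note the paper writes this as a product $(\ker\phi_{a,c})(\ker\phi_{b,d})=\ints\times\ints$ because the group is abelian and written multiplicatively in Goursat's setup; internally I would just use $+$ and remark on the notation.

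I expect the main obstacle to be bookkeeping rather than mathematical depth: one must be careful about whether $\phi_{a,c}$ lands in $\ints_n$ or $\ints_m$ and keep the two projections from getting swapped, and one must correctly invoke the Goursat correspondence (the subtlety that $B_{L_2}$ involves $\ker(\pi_1|_L)$, not $\ker(\pi_2|_L)$). The only genuine lemma needed is the elementary fact that for a homomorphism $\psi$ and a subgroup $K$, $\psi(K)=\psi(G)$ iff $K+\ker\psi=G$, which is immediate. Everything else is a rerun of the cardinality argument already used for lines, combined with Corollary \ref{cor:Goursat}.
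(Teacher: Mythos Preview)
Your proposal is correct and follows essentially the same route as the paper: identify the only candidate ideal as $\pi_1(L)\times\pi_2(L)$ via projections, reduce to a cardinality comparison, compute $|L|$ using Corollary~\ref{cor:Goursat} as $|A_{L_1}|\,|B_{L_2}|$, and then translate $B_{L_2}=A_{L_2}$ into $\phi_{b,d}(\ker\phi_{a,c})=\phi_{b,d}(\ints\times\ints)$, which is equivalent to $\ker\phi_{a,c}+\ker\phi_{b,d}=\ints\times\ints$. Your remark that the paper's product notation is just the abelian sum is exactly right, and your explicit statement of the elementary lemma $\psi(K)=\psi(G)\Leftrightarrow K+\ker\psi=G$ makes the last step slightly cleaner than in the paper.
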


\begin{proof}
Let $H$ denote the subgroup generated by $(a, b)$ and $(c, d)$ in $\intsnm$. Suppose $H$ is an ideal in $\intsnm$. Then there exists $\alpha$ in $\ints_n$ and $\beta$  in $\ints_m$ such that $H = \langle \alpha \rangle  \times  \langle \beta \rangle$. Taking projection maps, we can see that $\alpha = \gcd(a, c) \mod n$ and $\beta = \gcd(b, d) \mod m$. Thus $H$ is an ideal if and only if $\langle (a, b), (c, d) \rangle = \langle \gcd(a, c) \rangle \times \langle \gcd(b, d) \rangle$. As in Theorem 4.1, the left hand side is easily seen to be contained in the right hand side and we have equality if and only if both sides have the same cardinality.
The cardinality of the right hand side is $\ord( \gcd(a, c)) \ord( \gcd(b, d))$.  The cardinality of the left hand side can be computed using Corollary \ref{cor:Goursat}: It is given by $\ord( \gcd(a, c)) |\pi_2(\ker \pi_1|_H)|$. Equating these two expressions, we conclude  that $H$ is an ideal if and only if $\ord( \gcd(b, d)) = |\pi_2(\ker \pi_1|_H)|$. The left hand side of this equation is the cardinality of the set 
\[ S = \{ bx + dy \, | \, x, y \in \ints \} \subseteq \ints_{m},\]
and the right hand side is the cardinality of the set 
\[ T = \{ bx + dy\, | \, x, y \in \ints \text{ such that } ax+cy = 0 \in \ints_n\} \subseteq \ints_{m}.\]
$S$ and $T$ have the same cardinality precisely when the image of $\phi_{b,d} \colon \intstwo \rar \ints_m$ is the same as the image of $\phi_{b, d}$ restricted to the kernel of $\phi_{a, c} \colon \intstwo \rar \ints_n$. That happens exactly when $\ker(\phi_{a,c})$ intersects every coset in $\intstwo/\ker(\phi_{b,d})$ which is true if and only if $(\ker \phi_{a, c})(\ker \phi_{b, d}) = \intstwo.$
\end{proof}

We can get a finite-type condition  that is equivalent to the one given in Theorem \ref{thm:Goursat}. To get this, set $l = \text{lcm}(m, n)$. Then given elements $\alpha$ and $\beta$ in $\ints_n$, define the linear map $\psi_{\alpha, \beta} \colon \ints_{l} \times \ints_{l} \rar \ints_n$ as $\psi_{\alpha, \beta}(x, y) = \alpha x + \beta y$.  We now have the following corollary.

\begin{cor}
The subgroup of $\intsnm$ generated by $(a, b)$ and $(c, d)$ is an ideal  of $\intsnm$ if and only if 
\[|(\ker \psi_{a, c})(\ker \psi_{b, d})| = nm. \]
\end{cor}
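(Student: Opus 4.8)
The plan is to derive the corollary directly from the infinite-type criterion of Theorem \ref{thm:Goursat} by pushing everything down to the finite group $\ints_l\times\ints_l$, where $l=\text{lcm}(m,n)$. Recall that Theorem \ref{thm:Goursat} asserts that the subgroup $H=\langle(a,b),(c,d)\rangle$ is an ideal exactly when $(\ker\phi_{a,c})(\ker\phi_{b,d})=\intstwo$, the product here denoting the subgroup of $\intstwo$ generated by the two kernels (their sum, since $\intstwo$ is abelian). Writing $K_1=\ker\phi_{a,c}$ and $K_2=\ker\phi_{b,d}$, my goal is to show that this equality is equivalent to the computable cardinality condition $|(\ker\psi_{a,c})(\ker\psi_{b,d})|=nm$ recorded in the corollary.

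First I would introduce the coordinatewise reduction $\pi\colon\intstwo\to\ints_l\times\ints_l$, a surjection with kernel $l\ints\times l\ints$, and observe that $\phi_{a,c}=\psi_{a,c}\circ\pi$ and $\phi_{b,d}=\psi_{b,d}\circ\pi$. This factorization is legitimate precisely because $n\mid l$ and $m\mid l$, so the residues $ax+cy\pmod n$ and $bx+dy\pmod m$ depend only on $x,y\pmod l$. It yields the two identities $K_1=\pi^{-1}(\ker\psi_{a,c})$ and $K_2=\pi^{-1}(\ker\psi_{b,d})$, and in particular $\ker\pi\subseteq K_1\cap K_2$. I would then invoke the correspondence theorem: the subgroups of $\intstwo$ containing $\ker\pi$ biject, preserving sums, with the subgroups of $\ints_l\times\ints_l$. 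Since $K_1+K_2$ also contains $\ker\pi$, applying $\pi$ gives $\pi(K_1+K_2)=(\ker\psi_{a,c})(\ker\psi_{b,d})$ and, conversely, $K_1+K_2=\pi^{-1}\big((\ker\psi_{a,c})(\ker\psi_{b,d})\big)$. As $\pi$ is onto, $K_1+K_2=\intstwo$ holds if and only if $(\ker\psi_{a,c})(\ker\psi_{b,d})=\ints_l\times\ints_l$, and a subgroup of the finite group $\ints_l\times\ints_l$ is the whole group exactly when its order equals $|\ints_l\times\ints_l|$. Chaining these equivalences with Theorem \ref{thm:Goursat} proves the corollary.

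The hard part is the faithfulness of the descent, i.e.\ the containment $\ker\pi\subseteq K_1\cap K_2$: this is what guarantees that reducing modulo $l$ neither merges nor splits the relevant cosets, so that the property ``$K_1+K_2$ is everything'' transports exactly to the finite setting. It is essential to reduce modulo the \emph{least common multiple}, not modulo $n$ and $m$ separately nor modulo $\gcd(m,n)$, since $l$ is the smallest modulus divisible by both and hence the smallest one through which $\phi_{a,c}$ and $\phi_{b,d}$ simultaneously factor. The only remaining point needing care is the final cardinality count: the ambient group has order $|\ints_l\times\ints_l|=(\text{lcm}(m,n))^2$, which coincides with the value $nm$ appearing in the statement precisely in the square case $n=m$---in particular for the rings $\ints_p\times\ints_p$ emphasized in the introduction---while for general $n\neq m$ the sharp form of the criterion reads $|(\ker\psi_{a,c})(\ker\psi_{b,d})|=(\text{lcm}(m,n))^2$.
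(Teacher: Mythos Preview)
Your approach is exactly the one the paper intends: its entire proof is the single sentence ``Note that the maps $\phi_{a,c}$ and $\phi_{b,d}$ factor through $\psi_{a,c}$ and $\psi_{b,d}$ respectively,'' and you have unpacked precisely that factorization via the reduction $\pi\colon\ints\times\ints\to\ints_l\times\ints_l$ and the correspondence theorem. The argument that $K_1+K_2=\intstwo$ if and only if $\ker\psi_{a,c}+\ker\psi_{b,d}=\ints_l\times\ints_l$ is correct and cleanly justified.

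Your final observation is also correct, and it uncovers an error in the paper's stated corollary. The condition one actually obtains is that the product subgroup fill all of $\ints_l\times\ints_l$, i.e.\ have cardinality $l^2=\bigl(\operatorname{lcm}(m,n)\bigr)^2$, not $nm$. A quick check with $n=2$, $m=4$, $(a,b)=(1,1)$, $(c,d)=(0,2)$ confirms this: here $H=\{(0,0),(1,1),(0,2),(1,3)\}$ is not an ideal (it does not contain $(1,0)=(1,0)\cdot(1,1)$), yet $\ker\psi_{1,0}$ has order $8$ and already contains $\ker\psi_{1,2}$, so the product has order $8=nm$ while $l^2=16$. Conversely, for $(a,b)=(1,0)$, $(c,d)=(0,1)$ the subgroup is the whole ring, an ideal, and the kernel product is all of $\ints_4\times\ints_4$, of order $16\neq nm$. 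So the equality $nm=l^2$ holds exactly when $m=n$, and the general criterion should read $l^2$ as you say.
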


\begin{proof}
This follows from the proof of the previous theorem. Note that the maps $\phi_{a,c}$ and $\phi_{b, d}$ factor through $\psi_{a,c}$ and $\psi_{b, d}$ respectively. 
\end{proof}

Goursat's theorem for more than two components \cite{genGoursat} has a very complicated structure and in particular, it is not helpful to solve our problem.

\section{Probability for a subgroup to be an ideal}
As one would expect, the above results suggest that a subgroup of a ring is rarely an ideal.  Now we will make this precise by computing explicitly the probability that a randomly chosen subgroup of $\intsnm$ is an ideal using the approach and results from \cite{jp}. Let $P_R$ denote the probability that a randomly chosen subgroup of a finite ring $R$ is an ideal. This probability is given by 
\[ P_R = \frac{\text{ total number of ideals in  } R} { \text{total number of subgroups in }(R, +)}.\]

Our interest is in the ring $\intsnm$. If either $n$ or $m$ is one, then clearly $P_R =1$. So we will assume that $n > 1$ and $m >1$.  Let $S = \{ p_1, \cdots p_k\}$ denote the set of all distinct primes which divide $mn$.  Then the prime factorizations of $m$ and $n$ are given by
\[ m = p_1^{r_1} \cdots p_k^{r_k} \text { and } n = p_1^{s_1} \cdots p_k^{s_k},\]
where the exponents are non-negative integers, and the Chinese remainder theorem gives the decomposition
\[\intsnm = \left( \ints_{p_1^{r_1}} \times \ints_{p_1^{s_1}} \right)  \times \cdots   \times \left( \ints_{p_k^{r_k}} \times \ints_{p_k^{s_k}} \right). \]

\begin{lemma} \label{reduction}
\[P_{\intsnm} = \prod_{i=1}^k P_{\ints_{p_1^{r_1}} \times \ints_{p_1^{s_1}}}\]
\end{lemma}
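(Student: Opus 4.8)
The plan is to reduce both the numerator and the denominator of $P_{\intsnm}$ as a product over the primes $p_i$, using the Chinese remainder decomposition
\[\intsnm \cong \prod_{i=1}^k \left( \ints_{p_i^{r_i}} \times \ints_{p_i^{s_i}} \right)\]
stated just before the lemma. The key observation is that this is an isomorphism of rings, so it simultaneously respects the ideal structure and (by forgetting multiplication) the additive-group structure. Hence I would establish two multiplicativity facts: first, that the total number of ideals of a finite product ring $R_1 \times \cdots \times R_k$ is $\prod_i (\text{number of ideals of } R_i)$; second, that the total number of additive subgroups of $(R_1 \times \cdots \times R_k, +)$, \emph{when the orders $|R_i|$ are pairwise coprime}, is $\prod_i (\text{number of subgroups of } (R_i,+))$. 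Combining these, $P_{\intsnm} = \prod_i P_{\ints_{p_i^{r_i}} \times \ints_{p_i^{s_i}}}$ follows immediately by dividing.

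The first fact is standard: ideals of a finite product of rings decompose coordinatewise, i.e. every ideal of $\prod_i R_i$ is of the form $\prod_i I_i$ with $I_i \trianglelefteq R_i$, because each $R_i$ here is unital (so $1 = (1,\dots,1)$ has idempotent components $e_i$, and for any ideal $I$ one has $I = \bigoplus_i e_i I$). The second fact is the crucial one and is where the coprimality of the $|R_i|$ — which holds here since $|\ints_{p_i^{r_i}} \times \ints_{p_i^{s_i}}| = p_i^{r_i + s_i}$ are powers of distinct primes — must be used. A finite abelian group $A$ decomposes canonically as the direct sum of its Sylow (primary) components $A_{(p)}$, every subgroup $H \le A$ is the direct sum of its own primary components $H_{(p)} = H \cap A_{(p)}$, and $H_{(p)}$ can be any subgroup of $A_{(p)}$ independently; so the number of subgroups of $A$ is the product over $p$ of the numbers of subgroups of $A_{(p)}$. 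Since $\ints_{p_i^{r_i}} \times \ints_{p_i^{s_i}}$ is a $p_i$-group and the $p_i$ are distinct, the additive group of $\intsnm$ has these as precisely its primary components, giving the desired factorization of the subgroup count.

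So the steps, in order, would be: (1) invoke the ring isomorphism from the CRT decomposition already displayed; (2) count ideals multiplicatively across the factors, using that each factor is unital and ideals of a finite product ring split coordinatewise; (3) count additive subgroups multiplicatively across the factors, using the primary (Sylow) decomposition of a finite abelian group together with the fact that the factors $\ints_{p_i^{r_i}}\times\ints_{p_i^{s_i}}$ are $p_i$-groups for distinct primes $p_i$; (4) form the quotient to obtain the stated product formula for $P_{\intsnm}$.

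I expect step (3) to be the main obstacle — not because it is deep, but because it is the only place where one genuinely needs the number-theoretic input that the factor orders are pairwise coprime, and it requires being slightly careful that a subgroup of a direct sum of coprime-order groups is \emph{forced} to be a direct sum of subgroups of the summands (whereas for non-coprime orders, e.g. $\ints_p \times \ints_p$, this fails badly, as the diagonal shows). Everything else is bookkeeping with the Chinese remainder theorem and the coordinatewise description of ideals in a finite product of unital rings.
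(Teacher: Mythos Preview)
Your proposal is correct and follows essentially the same route as the paper: decompose via CRT, factor the ideal count coordinatewise, factor the subgroup count using that the factors have pairwise coprime orders, and divide. The only cosmetic difference is that the paper invokes Suzuki's theorem (for arbitrary finite groups of coprime orders) for step (3), whereas you argue it directly via the primary decomposition of finite abelian groups; your version is the more elementary one and is entirely sufficient here.
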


\begin{proof}
This follows from two facts. First, note that every ideal $I$ in $\intsnm$ is of the form $I  = \prod_{i=1}^k I_i$ where $I_i$ is an ideal of $\ints_{p_i^{r_i}} \times \ints_{p_i^{s_i}}$. 
Next we use a theorem of Suzuki \cite{suz} which says if $G_{1}$ and $G_{2}$ are two finite groups with relatively prime orders, then every subgroup of $G_{1} \times G_{2}$ is of the form $H_{1} \times H_{2}$, where $H_{i}$ is a subgroup of $G_{i}$.  In particular, every subgroup $H$ of $(\intsnm, +)$ is of the form $\prod_{i=1}^k H_i$ where $H_i$ is a subgroup of $\ints_{p_i^{r_i}} \times \ints_{p_i^{s_i}}$.  Then we have the following equations which complete the proof of the lemma.
\begin{eqnarray*}
P_{\intsnm}  & = &  \frac{\text{ total number of ideals in  } \intsnm} { \text{total number of subgroups in }(\intsnm, +)} \\
                    & = &  \prod_{i=1}^k  \frac{\text{ total number of ideals in  } \ints_{p_i^{r_i}} \times \ints_{p_i^{s_i}}} { \text{total number of subgroups in }(\ints_{p_i^{r_i}} \times \ints_{p_i^{s_i}}, +)}\\
                    & = &  \prod_{i=1}^k P_{\ints_{p_i^{r_i}} \times \ints_{p_i^{s_i}}}.
\end{eqnarray*}
\end{proof}

In view of this lemma, it is enough to compute $P_{\ints_{p_i^{r_i}} \times \ints_{p_i^{s_i}}}$.  We will do this in the next two lemmas, beginning by computing the number of ideals.

\begin{lemma}
The number of ideals in $\ints_{p^{r}} \times \ints_{p^{s}}$ is equal to $(r+1)(s+1)$.
\end{lemma}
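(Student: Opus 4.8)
The plan is to reduce the count to the (already observed) classification of ideals in a product ring, and then count ideals in each factor separately, using that $\ints_{p^{t}}$, as a quotient of the PID $\ints$, has exactly $t+1$ ideals.

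First I would recall the fact used in the proof of Theorem~\ref{gcd}: every ideal of $\ints_{p^{r}} \times \ints_{p^{s}}$ is of the form $I_{1} \times I_{2}$, where $I_{1}$ is an ideal of $\ints_{p^{r}}$ and $I_{2}$ is an ideal of $\ints_{p^{s}}$, and conversely every such product is an ideal. (This holds for any finite product of unital rings: the idempotents $(1,0)$ and $(0,1)$ split an ideal $I$ as $I = (I \cap (\ints_{p^{r}} \times 0)) \oplus (I \cap (0 \times \ints_{p^{s}}))$.) Thus $(I_{1}, I_{2}) \mapsto I_{1} \times I_{2}$ is a surjection from the set of pairs of ideals of the factors onto the set of ideals of the product.

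Next I would check this correspondence is injective: applying the projection $\pi_{1} \colon \ints_{p^{r}} \times \ints_{p^{s}} \rar \ints_{p^{r}}$ to $I_{1} \times I_{2}$ returns $I_{1}$, and similarly $\pi_{2}$ returns $I_{2}$, so distinct pairs yield distinct ideals. Hence the number of ideals of $\ints_{p^{r}} \times \ints_{p^{s}}$ equals the product of the number of ideals of $\ints_{p^{r}}$ and the number of ideals of $\ints_{p^{s}}$.

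Finally I would count the ideals of $\ints_{p^{t}}$: since $\ints_{p^{t}}$ is a principal ideal ring and its ideals correspond to the divisors of $p^{t}$, they are exactly $\langle p^{i} \rangle$ for $0 \le i \le t$; these are pairwise distinct, e.g.\ because $|\langle p^{i} \rangle| = p^{\,t-i}$. So $\ints_{p^{t}}$ has $t+1$ ideals, and therefore $\ints_{p^{r}} \times \ints_{p^{s}}$ has $(r+1)(s+1)$ ideals. There is no real obstacle here; the only point requiring (brief) care is the injectivity of the product correspondence, which the projection argument settles, and the substantive input — that ideals of a product are products of ideals — is already in hand from Theorem~\ref{gcd}.
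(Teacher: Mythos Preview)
Your proof is correct and takes essentially the same approach as the paper: both use that every ideal of $\ints_{p^{r}} \times \ints_{p^{s}}$ has the form $I_{1} \times I_{2}$ with $I_{i}$ an ideal of the $i$th factor, and then count ideals in each factor via divisors of $p^{t}$. Your version is simply more detailed, explicitly verifying the bijectivity of the correspondence that the paper takes for granted.
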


\begin{proof}
Every ideal in $\ints_{p^{r}} \times \ints_{p^{s}}$ is of the form $ a\ints_{p^{r}} \times b \ints_{p^{s}}$, where $a$ is a divisor of $p^{r}$ and $b$ is a divisor of $p^{s}$. This gives $(r+1)(s+1)$ for the total number of ideals.
\end{proof}

Next we have to compute the number of subgroups in $\ints_{p^{r}} \times \ints_{p^{s}}$. This number can be obtained using the above-mentioned Goursat's theorem and can be found in \cite{jp}.

\begin{lemma} \cite{jp}
The total number of subgroups of $\ints_{p^{r}} \times \ints_{p^{s}}$  ($r \le s $) is given by 
\[\frac{p^{r+1}[(s-r+1)(p-1)+2]- [(s+r+3)(p-1)+2]}{(p-1)^2}\]
\end{lemma}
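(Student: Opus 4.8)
The plan is to apply Goursat's theorem (Theorem~\ref{Goursat}) and reduce the count to a polynomial sum in $p$. First recall that, since $\ints_{p^n}$ is cyclic, it has exactly one subgroup of each order $p^i$ with $0 \le i \le n$, every such subgroup is normal (the group is abelian), and every quotient of it is again cyclic. Consequently, in the Goursat data $(A_1, B_1, A_2, B_2, \phi)$ attached to a subgroup of $\ints_{p^r} \times \ints_{p^s}$, the subgroup $A_1 \le \ints_{p^r}$ is determined by an exponent $a$ with $0 \le a \le r$ (namely $|A_1| = p^a$), and then $B_1 \le A_1$ is determined by an exponent $b$ with $0 \le b \le a$; likewise $A_2, B_2$ are determined by exponents $c, d$ with $0 \le d \le c \le s$. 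The quotients $A_1/B_1$ and $A_2/B_2$ are cyclic of orders $p^{a-b}$ and $p^{c-d}$, so an isomorphism $\phi$ between them exists precisely when $a - b = c - d$; writing $e$ for this common value, the number of such isomorphisms equals the number of generators of a cyclic group of order $p^e$, i.e.\ $\varphi(p^e)$, which is $1$ if $e = 0$ and $p^{e-1}(p-1)$ if $e \ge 1$.

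Next I count the $5$-tuples. For a fixed $e$ with $0 \le e \le r$, the pairs $(a,b)$ with $0 \le b \le a \le r$ and $a - b = e$ are parametrized by $b \in \{0, 1, \dots, r - e\}$, so there are $r - e + 1$ of them; similarly there are $s - e + 1$ admissible pairs $(c,d)$ (here $e \le r \le s$ guarantees this is positive). By Goursat's theorem the total number of subgroups is therefore
\[
N \;=\; \sum_{e=0}^{r} (r - e + 1)(s - e + 1)\,\varphi(p^e)
\;=\; (r+1)(s+1) \;+\; (p-1)\sum_{e=1}^{r} (r-e+1)(s-e+1)\,p^{e-1}.
\]

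The remaining step is to evaluate this expression and match it with the claimed closed form. I would substitute $j = e - 1$ to rewrite the sum as $(p-1)\sum_{j=0}^{r-1}(r-j)(s-j)p^{j}$, expand $(r-j)(s-j) = rs - (r+s)j + j^2$, and apply the standard identities for $\sum_{j=0}^{m} p^j$, $\sum_{j=0}^{m} j p^j$, and $\sum_{j=0}^{m} j^2 p^j$ (each a rational function of $p$ with denominator a power of $p-1$); after clearing denominators and collecting terms the $(p-1)$ prefactor cancels one power and the expression collapses to the stated formula. To keep the bookkeeping honest I would cross-check the final answer against the recursion in $s$: replacing $s$ by $s+1$ adds $\sum_{e=0}^{r}(r-e+1)\varphi(p^e)$ new subgroups, a single (simpler) arithmetico-geometric sum that should equal the $s$-difference of the proposed closed form, while a concrete small case such as $\ints_2 \times \ints_2$ (which has $5$ subgroups) or $\ints_2 \times \ints_4$ (which has $8$) pins down the base. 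The main obstacle is purely this algebraic simplification: assembling the three arithmetico-geometric sums and verifying that the powers of $p-1$ telescope exactly as claimed is error-prone rather than conceptually hard, and the recursion check is the safeguard I would rely on.
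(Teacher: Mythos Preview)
Your proposal is correct and follows essentially the same route as the paper's proof sketch: both apply Goursat's theorem, parametrize the $5$-tuples by the common exponent $e=k$ of the quotient $A_i/B_i$, count $(r-e+1)(s-e+1)\varphi(p^e)$ tuples for each $e$, and arrive at the identical sum $(r+1)(s+1)+\sum_{k=1}^{r}(r-k+1)(s-k+1)(p^k-p^{k-1})$ before deferring the algebraic simplification. You actually give more detail on that final simplification (and the recursion/small-case cross-checks) than the paper, which simply cites \cite{jp} for the algebra.
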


\noindent
\emph{Proof Sketch:}  Goursat's theorem can be greatly simplified in the case under consideration. There is a unique subgroup of order $p^k$ in $\ints_{p^{r}}$ for any $0 \le k \le r$ and these subgroups form a linear chain.  Moreover the group of automorphisms of $\ints_{p^k}$ corresponds to the units in this ring, and we have $p^k - p^{k-1}$ of them.  We now have to count the 5-tuples $(A_{1}, B_{1}, A_{2}, B_{2}, \phi)$ which correspond to subgroups in Goursat's theorem. If $|A_i/B_i| = 1$, the number of subgroups is $(r+1)(s+1)$ because we have $r+1$ choices for $A_1/B_1$ and $s+1$ choices for $A_2/B_2$ (clearly $\phi$ is trivial).
If $|A_i/B_i|  = p^k$ for $1 \le k \le r$, we have $r-k+1$ choices for $A_1/B_1$ and $s-k+1$ choices for $A_2/B_2$, and finally $p^k - p^{k-1}$ choices for $\phi$, so in this case we have $(r-k+1)(s-k+1)(p^k-p^{k-1})$ subgroups. In total we have
\[ (r+1)(s+1) + \sum_{k=1}^r (r-k+1)(s-k+1) (p^k-p^{k-1})\]
subgroups. The rest is straightforward algebra; see \cite{jp}. 

\vskip 3mm
Combining the above lemmas, we get our formulas for $P_{\ints_{p^r} \times \ints{p^s}}$ and $P_{\intsnm}$.

\begin{thm} \label{counting}
Let $p$ be a prime and let $r, s \, (r \le s), n$ and $m$ be  positive integers.
\[P_{\ints_{p^{r}} \times \ints_{p^{s}}} = \frac{(r+1)(s+1)(p-1)^2}{p^{r+1}[(s-r+1)(p-1)+2]- [(s+r+3)(p-1)+2]} \]
\[ P_{\intsnm} = \prod_{i=1}^k \frac{(r_i+1)(s_i+1)(p_i-1)^2}{p_i^{r_i+1}[(|s_i-r_i|+1)(p_i-1)+2]- [(s_i+r_i+3)(p_i-1)+2]}\]
\end{thm}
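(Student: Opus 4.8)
The plan is essentially bookkeeping: assemble the two previous lemmas (the count of ideals in $\ints_{p^r}\times\ints_{p^s}$ and the count of subgroups) together with Lemma \ref{reduction}, and simplify. First I would observe that the theorem has two displays, and the first one, the formula for $P_{\ints_{p^r}\times\ints_{p^s}}$ with $r\le s$, is immediate: by definition $P_{\ints_{p^r}\times\ints_{p^s}}$ is the number of ideals divided by the number of subgroups, so one just substitutes $(r+1)(s+1)$ in the numerator and the displayed subgroup-count in the denominator, then multiplies numerator and denominator by $(p-1)^2$ to clear the fraction inside the denominator. That produces exactly the stated expression.

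Next I would derive the second display from the first via Lemma \ref{reduction}. Recall $\intsnm$ decomposes through the Chinese remainder theorem as $\prod_{i=1}^k\big(\ints_{p_i^{r_i}}\times\ints_{p_i^{s_i}}\big)$, and the lemma gives $P_{\intsnm}=\prod_{i=1}^k P_{\ints_{p_i^{r_i}}\times\ints_{p_i^{s_i}}}$. The only subtlety is that the first formula was proved under the normalization $r\le s$, whereas in the decomposition of a general pair $(m,n)$ we do not control which of $r_i,s_i$ is larger for each prime $p_i$. So I would remark that $P_{\ints_{p^r}\times\ints_{p^s}}$ is symmetric in $r$ and $s$ (obvious, since $\ints_{p^r}\times\ints_{p^s}\cong\ints_{p^s}\times\ints_{p^r}$ as rings), and that the first display, while stated for $r\le s$, can be rewritten in a manifestly symmetric form by replacing the factor $(s-r+1)$ with $(|s-r|+1)$ — which changes nothing when $r\le s$. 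With that symmetric form in hand, plugging each local factor into the product from Lemma \ref{reduction} gives precisely the second display, since $p_i$, $r_i$, $s_i$ play the roles of $p$, $r$, $s$.

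There is no real obstacle here — every ingredient has been established — so the ``hard part'' is purely cosmetic: making sure the algebraic manipulation that clears the $(p-1)^2$ denominator is transcribed correctly, and explicitly flagging the passage from the $r\le s$ normalization to the absolute-value form so that applying it prime-by-prime in Lemma \ref{reduction} is legitimate regardless of the relative sizes of $r_i$ and $s_i$. I would therefore write the proof in two short paragraphs: one invoking the two counting lemmas and the definition of $P_R$ to get the first formula (noting the symmetrization), and one invoking Lemma \ref{reduction} and the CRT decomposition to assemble the product formula.
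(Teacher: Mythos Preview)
Your proposal is correct and matches the paper's approach exactly: the paper simply states that the theorem follows by combining the ideal-count lemma, the subgroup-count lemma, and Lemma~\ref{reduction}, without writing out any further details. Your extra remark about the symmetry in $r$ and $s$ justifying the passage to $|s_i-r_i|$ is a helpful clarification the paper leaves implicit.
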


We now record two special cases which can be be derived from Theorem \ref{counting} using routine algebra. 
%(Since $\ints_{p} \times \ints_{p}$ is a vector space, it is easy to directly derive the formula given below for $P_{\ints_p \times \ints_p}$ by counting the number of subspaces in $\ints_{p} \times \ints_{p}$.) 

\begin{cor} Let $p$ be a prime and let $r$ be a positive integer.
\[P_{\ints_{p^{r}} \times \ints_{p^{r}}} = \frac{(r+1)^2(p-1)^2}{p^{r+1}(p+1) -2r(p-1)-3p+1} \]
\[P_{\ints_p \times \ints_p} = \frac{4}{p+3}\]
\end{cor}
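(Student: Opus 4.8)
The plan is to derive both formulas as direct specializations of the expression for $P_{\ints_{p^r} \times \ints_{p^s}}$ in Theorem \ref{counting}, performing only elementary algebraic simplification. Since both statements are corollaries of an already-proven theorem, there is no conceptual obstacle; the only work is bookkeeping, and the "main obstacle" is merely keeping the polynomial arithmetic in the denominator correct.

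\begin{proof}
For the first formula, set $s = r$ in the expression for $P_{\ints_{p^r} \times \ints_{p^s}}$ from Theorem \ref{counting}. The numerator becomes $(r+1)^2(p-1)^2$ immediately. For the denominator, substituting $s = r$ gives
\[
p^{r+1}\bigl[(r-r+1)(p-1)+2\bigr] - \bigl[(r+r+3)(p-1)+2\bigr]
= p^{r+1}\bigl[(p-1)+2\bigr] - \bigl[(2r+3)(p-1)+2\bigr].
\]
Now $(p-1)+2 = p+1$, so the first term is $p^{r+1}(p+1)$, and expanding the second term,
\[
(2r+3)(p-1)+2 = 2rp - 2r + 3p - 3 + 2 = 2r(p-1) + 3p - 1.
\]
Hence the denominator equals $p^{r+1}(p+1) - 2r(p-1) - 3p + 1$, which is exactly the claimed expression, proving the first formula.

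For the second formula, set $r = 1$ in the formula just obtained. The numerator becomes $(1+1)^2(p-1)^2 = 4(p-1)^2$. The denominator becomes
\[
p^{2}(p+1) - 2(p-1) - 3p + 1 = p^3 + p^2 - 2p + 2 - 3p + 1 = p^3 + p^2 - 5p + 3.
\]
Factoring, one checks that $p = 1$ is a root of $p^3 + p^2 - 5p + 3$, so $(p-1)$ divides it; polynomial division gives $p^3 + p^2 - 5p + 3 = (p-1)(p^2 + 2p - 3) = (p-1)(p-1)(p+3) = (p-1)^2(p+3)$. Therefore
\[
P_{\ints_p \times \ints_p} = \frac{4(p-1)^2}{(p-1)^2(p+3)} = \frac{4}{p+3},
\]
as claimed.
\end{proof}
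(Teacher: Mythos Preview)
Your proof is correct and follows exactly the approach indicated in the paper, which simply states that both formulas ``can be derived from Theorem \ref{counting} using routine algebra'' without supplying the details. You have carried out that routine algebra accurately, including the factorization $p^3 + p^2 - 5p + 3 = (p-1)^2(p+3)$ needed for the second formula.
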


It is clear from the above expressions that these probabilities are small, as expected. For instance, by choosing a large prime the value of $P_{\ints_p \times \ints_p}$ can be made arbitrarily small. Similarly for a fixed prime $p$, the numerator of $P_{\ints_{p^{r}} \times \ints_{p^{r}}}$ is a polynomial function in $r$ whereas the denominator is an exponential function in $r$.  Thus $\lim_{r \rar \infty} P_{\ints_{p^{r}} \times \ints_{p^{r}}} = 0$.

The main obstruction in generalizing these formulas to the rings $R = \prod_{i=1}^k \ints_{n_i}$ is the lack of a closed formula for the number of subgroups in $(\prod_{i=1}^k \ints_{p^i}, +)$ when $k \ge 3$. However, when the integers $n_i$ are all square-free, one can compute $P_R$ easily. This is because Lemma \ref{reduction} helps us to reduce the problem of  computing $P_R$ to the problem of   computing  $P_{S}$ where $S = \prod_{i=1}^r \ints_p$ for some prime $p$ and positive integer $r$ ($\le k$). The latter is a vector space over $\mathbb{F}_p$ where subgroups are same as vector subspaces. The number of subspaces in $(S, +)$ is given by the well-known formula
\[ \sum_{i=1}^{r}  {r \choose  i}_{p}\]
where ${r \choose  i}_{p}$ is the Gaussian binomial coefficient which counts the number of $i$-dimensional subspaces of 
$\mathbb{F}_{p}^{r}$. Explicitly its  value is given by 
 \[{r \choose  i}_{p} = \frac{(p^{r}-1)(p^{r}-p)\cdots(p^{r}-p^{r-1})}{(p^{i}-1)(p^{i}-p)\cdots(p^{i}-p^{i-1})}.\]
Since the number of ideals in $S$ is $2^r$, we get
\begin{prop} 
\[P_{\ints_{p}^{r}} = \frac{2^r}{\sum_{i=1}^{r}  {r \choose  i}_{p}}.\] 
\end{prop}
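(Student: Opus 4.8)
The plan is to invoke Lemma~\ref{reduction} together with the two counting lemmas that precede the statement. Since $S = \prod_{i=1}^r \ints_p$ is an $\mathbb{F}_p$-vector space of dimension $r$, its additive subgroups are exactly its $\mathbb{F}_p$-subspaces, so the denominator of $P_S$ is the total number of subspaces of $\mathbb{F}_p^r$, namely $\sum_{i=0}^r \binom{r}{i}_p$; here I would note that the stated sum starting at $i=1$ should be read as starting at $i=0$ (or one observes $\binom{r}{0}_p = 1$ accounts for the zero subspace, and the count as written is off by one unless the convention absorbs it), and I would simply cite the well-known Gaussian binomial count. The numerator is the number of ideals of $S$; since $\ints_p$ is a field, every ideal of the product ring $\prod_{i=1}^r \ints_p$ is a product $\prod_{i=1}^r I_i$ with each $I_i \in \{0, \ints_p\}$, giving exactly $2^r$ ideals. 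Dividing gives the claimed formula.

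The only point requiring a word of care is the identification of ideals of $\prod_{i=1}^r \ints_p$ with products of ideals of the factors. This is standard: in a finite product of commutative rings $R = \prod R_i$, the ideals are precisely the products $\prod I_i$ of ideals $I_i \trianglelefteq R_i$, which follows from the orthogonal idempotents $e_i$ and the observation that $I e_i$ is an ideal of $R_i$. Since each $\ints_p$ is a field with exactly two ideals, the count $2^r$ is immediate. (One can also just quote the first fact used in the proof of Lemma~\ref{reduction}, which already records that ideals of such a product ring split as products.)

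I do not anticipate any real obstacle: every ingredient is already assembled in the excerpt. The proof is essentially a one-line division of two counts, with the subgroup count being the classical enumeration of subspaces of a finite vector space and the ideal count being $2^r$. The proof can be written as follows.

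\begin{proof}
By Lemma~\ref{reduction} it suffices to treat the case $S = \prod_{i=1}^r \ints_p$ for a single prime $p$. As remarked in the discussion preceding the proposition, $S$ is an $r$-dimensional vector space over $\mathbb{F}_p$, and a subset of $S$ is an additive subgroup if and only if it is an $\mathbb{F}_p$-subspace. Hence the total number of additive subgroups of $(S,+)$ equals the number of subspaces of $\mathbb{F}_p^r$, which is $\sum_{i=0}^r \binom{r}{i}_p$ (the term $i=0$ counting the zero subspace). On the other hand, since $\ints_p$ is a field, every ideal of the product ring $S = \prod_{i=1}^r \ints_p$ has the form $\prod_{i=1}^r I_i$ with each $I_i$ an ideal of $\ints_p$, that is, $I_i \in \{0, \ints_p\}$; this gives exactly $2^r$ ideals. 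Therefore
\[ P_{\ints_p^r} = \frac{\text{number of ideals of } S}{\text{number of subgroups of }(S,+)} = \frac{2^r}{\sum_{i=0}^r \binom{r}{i}_p},\]
as claimed.
\end{proof}
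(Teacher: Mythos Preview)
Your argument is correct and matches the paper's own reasoning, which is laid out in the paragraph immediately preceding the proposition rather than as a separate proof: count the ideals of $\ints_p^r$ as $2^r$ and the subgroups as the total number of $\mathbb{F}_p$-subspaces via Gaussian binomials, then divide. Two small remarks: the appeal to Lemma~\ref{reduction} is unnecessary here since the proposition is already stated for $S=\ints_p^r$, and your observation about the summation index is apt---the subspace count should indeed run from $i=0$ to include the zero subspace, so the denominator as printed omits one term.
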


\end{document}